\numberwithin{equation}{section}
\newtheorem{theorem}{Theorem}[section]
\newtheorem{proposition}[theorem]{Proposition}
\newtheorem{lemma}[theorem]{Lemma}
\newtheorem{remark}{Remark}[section]
\newtheorem{example}{Example}[section]
\newcommand{\OMIT}[1]{{\bf [OMIT:} #1 \ {\bf --- end OMIT] }}  %%% For work
   \renewcommand{\OMIT}[1]{}            %%% For FINAL
\newcommand{\RR}{{\mathbb{R}}}
\newcommand{\ZZ}{{\mathbb{Z}}}
\newcommand{\veczero}{{\bf 0}}
\newcommand{\dom}{{\rm dom\,}}
\newcommand{\unitvec}[1]{\bm{1}\sp{#1}}
\newcommand{\conv}{\Box\,}
\newcommand{\finbox}{\hspace*{\fill}$\rule{0.2cm}{0.2cm}$}
\newcommand{\Mnat}{{M$^{\natural}$}}
\newcommand{\Mnvex}{\mbox{\rm (M$\sp{\natural}$-EXC)} }
\begin{document}

\title{A Note on M-convex Functions on Jump Systems%
%%\thanks{
%%This work was supported by 
%%CREST, JST, Grant Number JPMJCR14D2, Japan, and
%%JSPS KAKENHI Grant Number 26280004.}
}%title

\author{
Kazuo Murota%
\thanks{Department of Economics and Business Administration,
Tokyo Metropolitan University, 
Tokyo 192-0397, Japan, 
murota@tmu.ac.jp}
}%%author

%%\date{July 2019 / September 2020 (Version \today)}
\date{July 2019 / September 2020}

\maketitle

\begin{abstract}
A jump system is defined as a set of integer points (vectors) with a certain exchange property,
generalizing the concepts of matroids, delta-matroids,
and  base polyhedra of integral polymatroids (or submodular systems). 
A discrete convexity concept 
is defined for functions on constant-parity jump systems
and it has been used in graph theory and algebra.
In this paper we call it ``jump M-convexity'' 
and extend it to ``jump \Mnat-convexity''
for functions defined on a larger class of jump systems.
By definition, every jump M-convex function is a jump \Mnat-convex function,
and we show the equivalence of these concepts
by establishing an (injective) embedding of
jump \Mnat-convex functions in $n$ variables 
into the set of jump M-convex functions in $n+1$ variables.
Using this equivalence
we show further that jump \Mnat-convex functions admit a number of natural 
operations such as aggregation, 
projection (partial minimization), convolution, composition,
and transformation by a network.
\end{abstract}

{\bf Keywords}:
Discrete convex analysis, 
Jump system,
M-convex function.

%%Integer programming
%%\subclass{52A41 \and 90C10}
%%52A41=Convex functions and convex programs
%%90C10=Integer programming
%%90C27 = combinatorial optimization
%%90C25 = convex programming

%\newpage
%\tableofcontents

%\newpage

%% 2019-07-05 / 2020-09-17

\section{Introduction}
\label{SCintro}

A jump system \cite{BouC95} (see also  \cite{KS05jump,Lov97}) 
is defined as a set of integer points (vectors) with a certain exchange property,
generalizing the concepts of matroids, 
delta-matroids \cite{Bou87,CK88,DH86},
and  base polyhedra of integral polymatroids 
(or submodular systems) \cite{Fuj05book}. 
A concept of discrete convex functions
was introduced in \cite{Mmjump06}
for functions defined on constant-parity jump systems, 
which we call ``jump M-convex functions'' in this paper.
This is a common generalization of valuated delta-matroids \cite{DW91valdel} 
and M-convex functions \cite{Mdcasiam},
where a valuated delta-matroid is defined on an even delta-matroid and 
an M-convex function on (the integer points of) an integral base polyhedron.
We can say that a valuated delta-matroid is precisely 
the negative of a jump M-convex function 
whose domain of definition is a constant-parity jump system consisting of 0-1 vectors,
and an M-convex function is precisely 
a jump M-convex function 
whose domain of definition is a constant-sum jump system.

In this paper we extend the concept of M-convexity
to functions defined on a larger class of jump systems
that satisfy a simultaneous exchange property.
This extension corresponds, roughly, 
to the extension of M-convex functions on base polyhedra 
to \Mnat-convex functions%
\footnote{%%%%%%%%%%
``\Mnat''  should be pronounced as  ``em natural.''
} %%%%%% footnote %%%%
 on (generalized) polymatroids,
and accordingly, we refer to the extended concept as ``jump \Mnat-convexity.''
By the definition, described in Section \ref{SCdefs}, 
every jump M-convex function is a jump \Mnat-convex function.
The main result of this paper (Theorem \ref{THjmnatfn}) is concerned with 
the equivalence of these concepts,
establishing an (injective) embedding of
jump \Mnat-convex functions in $n$ variables 
into the set of jump M-convex functions in $n+1$ variables.

Jump M-convex functions have found applications 
in graph theory \cite{AS09convex,BK12,KST12cunconj,KT09evenf,Tak14fores}
and algebra \cite{Bra10halfplane}.
Apollonio and Seb{\H o} \cite{AS04square} (see also \cite{AS09convex}) 
considered minimization of the square-sum 
on the degree sequences of a graph,
which was the motivation of formulating the concept of
jump M-convex functions in \cite{Mmjump06}. 
A separable convex function on the degree sequences of a graph 
is a typical example of jump M-convex functions.
In a  study of matching forests, 
Takazawa \cite{Tak14fores} made use of an extension of valuated delta-matroids,
defined on delta-matroids that are not necessarily even but
are equipped with a certain simultaneous exchange property.
This motivated the present work.
A valuated delta-matroid in this extended sense is precisely 
the negative of a jump \Mnat-convex function 
whose domain of definition is contained in the unit cube (consisting of 0-1 vectors).

Our main result,
connecting jump \Mnat-convexity to jump M-convexity,
enables us to translate all results 
known for jump M-convex functions
\cite{KM07jumplink,KMT07jump,Mmjump06,MT06steep,ST07jump} 
to those for jump \Mnat-convex functions.
In this paper we are interested in 
fundamental operations such as aggregation, 
projection (partial minimization), convolution, composition,
and transformation by a network.
We investigate these operations for jump \Mnat-convex functions
on the basis of the results of  \cite{KMT07jump} 
for jump M-convex functions,
to conclude  that jump \Mnat-convex functions
admit these natural operations.
In particular, jump \Mnat-convexity is preserved
under the convolution operation (Theorem~\ref{THjmnatfnopConvol})
and the transformation by a network (Theorem~\ref{THjmnatfnopNettrans}).

This paper is organized as follows.
Section~\ref{SCdefs} gives the definitions, with the introduction of
the concept of jump \Mnat-convex functions.
Section~\ref{SCrelation} reveals precise relation between 
jump \Mnat-convexity and jump M-convexity.
Section~\ref{SCoperjmnat} deals with operations for jump \Mnat-convex functions.

%%%%%%%%%%%%%%%%%%%%%%

\section{Definitions}
\label{SCdefs}

Let $n$ be a positive integer and $N = \{ 1,2, \ldots, n \}$.
We consider functions defined on integer lattice points, 
$f: \ZZ\sp{n} \to \RR \cup \{ +\infty \}$,
where 
the function may possibly take $+\infty$.
The {\em effective domain} of $f$ means the set of $x$
with $f(x) <  +\infty$ and is denoted 
by $\dom f =   \{ x \in \ZZ\sp{n} \mid  f(x) < +\infty \}$.
We always assume that $\dom f$ is nonempty.

For $i \in N$, the $i$th unit vector is denoted by $\unitvec{i}$.
Let $x$ and $y$ be integer vectors.
The vectors of componentwise maximum and minimum of $x$ and $y$
are denoted by $x \vee y$ and $x \wedge y$, respectively.
The smallest integer box (interval, rectangle) containing $x$ and $y$
is given by $[x \wedge y,x \vee y]_{\ZZ}$.
A vector $s \in \ZZ\sp{n}$ is called an {\em $(x,y)$-increment} if
$s= \unitvec{i}$ or $s= -\unitvec{i}$ for some $i \in N$ and
$x+s \in [x \wedge y,x \vee y]_{\ZZ}$.
An {\em $(x,y)$-increment pair} will mean
a pair of vectors $(s,t)$ such that
$s$ is an $(x,y)$-increment and $t$ is an $(x+s,y)$-increment.

A nonempty set $S \subseteq \ZZ\sp{n}$ is said 
to be a {\em jump system} 
\cite{BouC95}
if it satisfies an exchange axiom,
called the {\em 2-step axiom}:
\begin{description}
\item[(2-step axiom)]
For any  $x, y \in S$
and any $(x,y)$-increment $s$ with $x+s \not\in S$,
there exists an $(x+s,y)$-increment $t$ such that $x+s+t \in S$.
\end{description}
Note that we have the possibility of $s = t$ in the 2-step axiom.

A set $S \subseteq \ZZ\sp{n}$ is called a {\em constant-sum system}
if $x(N)=y(N)$ for any $x,y \in S$.
A constant-sum jump system is nothing but an M-convex set,
since an M-convex set is a constant-sum system
and, for a constant-sum system, the 2-step axiom 
is equivalent to the pair of following exchange axioms:
\begin{description}
\item[(B-EXC$_{+}$)] 
For any $x, y \in S$ and 
$i \in N$ with $x_{i} < y_{i}$,
there exists some 
$j \in N$ with $x_{j} > y_{j}$ such that
$x+\unitvec{i}-\unitvec{j} \in S$,

\item[(B-EXC$_{-}$)] 
For any $x, y \in S$ and 
$i \in N$ with $x_{i} > y_{i}$,
there exists some 
$j \in N$ with $x_{j} < y_{j}$ such that
$x-\unitvec{i}+\unitvec{j} \in S$,
\end{description}
each of which is known to characterize an M-convex set \cite[Theorem 4.3]{Mdcasiam}.
%% 2020-09-17 %%%%%%%%%

A set $S \subseteq \ZZ\sp{n}$ is called a {\em constant-parity system}
if $x(N)-y(N)$ is even for any $x,y \in S$.
For a constant-parity system $S$,
the 2-step axiom is simplified to:
\begin{description}
\item[(J-EXC$_{+}$)]
For any  $x, y \in S$
and for any $(x,y)$-increment $s$,
there exists an $(x+s,y)$-increment $t$ such that $x+s+t \in S$.
\end{description}
It is known (\cite[Lemma~2.1]{Mmjump06})
that this exchange property (J-EXC$_{+}$) is equivalent to 
the following (seemingly stronger) exchange property:
\begin{description}
\item[(J-EXC)]
For any  $x, y \in S$ and any $(x,y)$-increment $s$,
there exists an $(x+s,y)$-increment $t$ such that
$x+s+t \in S$ and $y-s-t \in S$.
\end{description}
That is, (J-EXC) characterizes a {\em constant-parity jump system}
(or {\em c.p.~jump system} for short).

\begin{example}  \rm  \label{EXjumpdim1}
Let  $S = \{ 0, 2 \}$, which is a subset of $\ZZ$ (with $n=1$).
This is a constant-parity jump system.
Indeed, for $(x,y,s)=(0,2,1)$ we can take $t=1$ in (J-EXC),
and for $(x,y,s)=(2,0,-1)$ we can take $t=-1$ in (J-EXC).
In contrast,
this set is not an \Mnat-convex set,
since we cannot take $t=s$ in 
the exchange axiom for an \Mnat-convex set \cite[Section 4.7]{Mdcasiam}.
%% 2020-09-17 %%%%%%%%%
\finbox
\end{example}

A function  $f: \ZZ\sp{n} \to \RR \cup \{ +\infty \}$ is called%
\footnote{%%
This concept (called ``jump M-convex function" here)  
was introduced in \cite{Mmjump06} under the name of
``M-convex function on a jump system."
} %%%% footnote %%% 
{\em jump M-convex} 
if it satisfies
the following exchange axiom:
\begin{description}
\item[(JM-EXC)]
For any  $x, y \in \dom f$
and any $(x,y)$-increment $s$,
 there exists an $(x+s,y)$-increment $t$ such that
$x+s+t \in \dom f$,  \  $y-s-t \in \dom f$, and
\begin{equation}  \label{jumpmexc2}
 f(x)+f(y) \geq  f(x+s+t)  + f(y-s-t) .
\end{equation} 
\end{description}
%% To Typesetter: Do not indent here %%
The effective domain of a jump M-convex function is 
a constant-parity jump system.

Just as we consider \Mnat-convex functions in addition to M-convex functions,
we can introduce the concept of {\em jump \Mnat-convex functions}
by the following exchange axiom:
\begin{description}
\item[(J\Mnat-EXC)]
For any  $x, y \in \dom f$
and any $(x,y)$-increment $s$, we have
(i) or (ii) below:
\\
(i) $x+s \in \dom f$, \  $y-s \in \dom f$, and 
\begin{equation}  \label{jumpmexc1}
 f(x)+f(y) \geq  f(x+s)  + f(y-s) , 
\end{equation}
(ii) there exists an $(x+s,y)$-increment $t$ such that
$x+s+t \in \dom f$,  \  $y-s-t \in \dom f$, and
\eqref{jumpmexc2} holds.
\end{description}
This condition 
(J\Mnat-EXC) is weaker than (JM-EXC),
and hence every jump M-convex function is a jump \Mnat-convex function.

As a consequence of (J\Mnat-EXC),
the effective domain of a jump \Mnat-convex function
is a jump system that satisfies 
\begin{description}
\item[(J$\sp{\natural}$-EXC)]
For any  $x, y \in S$ and any $(x,y)$-increment $s$, we have
(i) $x+s \in S$ and $y-s \in S$, or 
(ii)
there exists an $(x+s,y)$-increment $t$ such that
$x+s+t \in S$ and $y-s-t \in S$.
\end{description}
A jump system that satisfies
(J$\sp{\natural}$-EXC) 
will be called a {\em simultaneous exchange jump system}
(or {\em s.e.~jump system} for short).
Every constant-parity jump system
is a simultaneous exchange jump system,
since the condition (J-EXC) implies (J$\sp{\natural}$-EXC).

When a set $S$ is a subset of $\{ 0, 1 \}\sp{N}$,
$S$ is a c.p.~jump system
if and only if
it is an even delta-matroid,
and $S$ is an s.e.~jump system
if and only if
it is a simultaneous delta-matroid considered in \cite{Tak14fores},
where a subset of $N$ is identified with its characteristic vector.
Furthermore, 
when the effective domain of a function $f$ is contained in the unit cube $\{ 0, 1 \}\sp{N}$,
$f$ is jump M-convex 
if and only if
$-f$ is a valuated delta-matroid of \cite{DW91valdel},
and 
$f$ is jump \Mnat-convex 
if and only if
$-f$ is a valuation on a simultaneous delta-matroid 
in the sense of \cite{Tak14fores}.

Not every jump system is a simultaneous exchange jump system,
as the following examples show.

\begin{example}  \rm  \label{EXnonsejumpdim1}
Let  $S = \{ 0, 2, 3 \}$, which is a subset of $\ZZ$ (with $n=1$).
This set satisfies the 2-step axiom, and hence is a jump system.
However, it does not satisfy
the simultaneous exchange property (J$\sp{\natural}$-EXC).
Indeed, (J$\sp{\natural}$-EXC) fails for $x=0$, $y=3$, and $s=1$.
\finbox
\end{example}

\begin{example}[{\cite{Tak14fores}}]  \rm  \label{EXnonsejump}
Let 
$S = \{ (0,0,0), (1,1,0), (1,0,1), (0,1,1), (1,1,1) \}$.
This set satisfies the 2-step axiom, and hence is a jump system.
However, it does not satisfy
the simultaneous exchange property (J$\sp{\natural}$-EXC).
Indeed, (J$\sp{\natural}$-EXC) fails 
for $x=(0,0,0)$, $y=(1,1,1)$, and $s=(1,0,0)$.
It is worth noting that 
$S$ consists of the characteristic vectors of 
the rows (and columns) of nonsingular principal minors of 
the symmetric matrix
\[ 
A = \left[ \begin{array}{ccc}
0 & 1 & 1 \\
1 & 0 & 1 \\
1 & 1 & 0 \\
\end{array} \right]
\]
and hence it is a delta-matroid.
\finbox
\end{example}

The following example demonstrates the difference of
\Mnvex and (J\Mnat-EXC) for functions.

\begin{example}  \rm  \label{EXjmnatmnat}
Let 
$S=\{ (0,0), (1,0), (0,1), (1,1) \}$ 
and define $f: \ZZ\sp{2} \to \RR \cup \{ +\infty \}$ 
by
$f(0,0)=f(1,1)=a$ and $f(1,0)=f(0,1)=b$
with  $\dom f = S$.
\Mnvex is satisfied
if and only if $a \geq b$, whereas
(J\Mnat-EXC) is true for any $(a, b)$.
\finbox
\end{example}

The inclusion relations for sets and functions may be summarized as follows:
\begin{align*}
& 
\{ \mbox{\rm M-convex sets} \}  \subsetneqq \
  \left\{  \begin{array}{l}  \{ \mbox{\rm \Mnat-convex  sets} \}
                          \\ \{ \mbox{\rm c.p. jump systems} \} \end{array}  \right \}
\subsetneqq \   \{ \mbox{\rm s.e. jump systems} \} 
\subsetneqq \   \{ \mbox{\rm jump systems} \} ,
\\
&
\{ \mbox{\rm M-convex fns} \}  \subsetneqq \
  \left\{  \begin{array}{l}  \{ \mbox{\rm \Mnat-convex fns} \}
                          \\ \{ \mbox{\rm jump M-convex fns} \} \end{array}  \right \}
\subsetneqq \  
\{ \mbox{\rm jump \Mnat-convex fns} \} .
\end{align*}
It is noted that no convexity class is introduced for functions defined on general jump systems.

%%%%%%%%%%%%%%%%%%%%%%

\section{Relation between jump M- and \Mnat-convex functions}
\label{SCrelation}

\subsection{Theorems}
\label{SCthmjmjmnat}

By the definitions,
jump M-convex functions are a special case of jump \Mnat-convex functions.
In this section we show that they are in fact 
equivalent to each other in the sense that 
jump \Mnat-convex function in $n$ variables 
can be identified with jump M-convex functions in $n+1$ variables.

For any integer vector $x \in \ZZ\sp{n}$
we introduce a notation  $\pi(x)$ to indicate
the parity of its component sum $x(N)$.
That is, we define $\pi(x)=0$ if $x(N)$ is even, and
$\pi(x)=1$ if $x(N)$ is odd.
For any set $S \subseteq \ZZ\sp{n}$ we associate a set
$\tilde S \subseteq \ZZ\sp{n+1}$ defined by
\begin{equation}  \label{Stilde}
 \tilde S = \{ (x_{0}, x) \in \ZZ \times \ZZ\sp{n} \mid
  x_{0} = \pi(x), x \in S \} ,
\end{equation}  
which is a constant-parity system.
It is pointed out by J. Geelen \cite{Gee96prcom} that
(J$\sp{\natural}$-EXC) for $S$ is equivalent to (J-EXC) for $\tilde S$.

\begin{theorem}[{\cite{Gee96prcom}}] \label{THjnatset}
$S$ satisfies {\rm (J$\sp{\natural}$-EXC)} if and only if 
$\tilde S$ satisfies {\rm (J-EXC)}.
That is,
$S$ is a simultaneous exchange jump system 
if and only if
$\tilde S$ is a constant-parity jump system.
\end{theorem}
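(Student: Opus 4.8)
The plan is to prove both implications directly from the definitions, exploiting the tight correspondence between increments in $S$ and increments in $\tilde S$ under the map $x \mapsto (\pi(x), x)$. The key structural fact to establish first is a \emph{dictionary} between the two exchange properties. Observe that in $\tilde S$, every element has its zeroth coordinate pinned to $\pi(x)$, and each time we add a $\pm\unitvec{i}$ with $i \in N$ to $x$ the parity $\pi$ flips, forcing a compensating $\pm\unitvec{0}$ move in the zeroth coordinate. Thus a single step $x \mapsto x+s$ in $S$ (with $s = \pm\unitvec{i}$, $i\in N$) corresponds to a \emph{two-step} move $(\pi(x),x) \mapsto (\pi(x),x) + \tilde s_0 + \tilde s$ in $\tilde S$, where $\tilde s$ lifts $s$ into coordinates $N$ and $\tilde s_0 = \pm\unitvec{0}$ restores the parity. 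Conversely, a two-step $(\tilde x, \tilde y)$-increment pair in $\tilde S$ that touches the zeroth coordinate exactly once projects to a single increment in $S$, while a two-step pair that leaves the zeroth coordinate fixed projects to a genuine $(x,y)$-increment pair in $S$.

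First I would set up notation carefully: write $\tilde x = (\pi(x), x)$ and $\tilde y = (\pi(y), y)$ for $x,y \in S$, and note that $\tilde x_0 - \tilde y_0 = \pi(x) - \pi(y) \equiv x(N) - y(N) \pmod 2$, so that the zeroth coordinate of an $(\tilde x, \tilde y)$-increment is available precisely when $\pi(x) \neq \pi(y)$. Then I would prove the ``only if'' direction: assuming (J$\sp{\natural}$-EXC) for $S$, take $\tilde x, \tilde y \in \tilde S$ and an $(\tilde x, \tilde y)$-increment $\tilde s$, and verify (J-EXC) for $\tilde S$. Here one splits into cases according to whether $\tilde s = \pm\unitvec{0}$ (a parity-only move) or $\tilde s = \pm\unitvec{i}$ with $i \in N$. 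In the latter case, $\tilde s$ projects to an $(x,y)$-increment $s$ in $S$; applying (J$\sp{\natural}$-EXC) yields either alternative (i) or (ii), and in each sub-case I would exhibit the required second increment $\tilde t$ (either the parity-correcting $\pm\unitvec{0}$ when $S$ gave a single-step move, or the lift of $t$ when $S$ gave a genuine pair) so that both $\tilde x + \tilde s + \tilde t \in \tilde S$ and $\tilde y - \tilde s - \tilde t \in \tilde S$.

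For the ``if'' direction I would run the correspondence in reverse: assuming (J-EXC) for $\tilde S$, take $x, y \in S$ and an $(x,y)$-increment $s$, lift $s$ to an $(\tilde x, \tilde y)$-increment $\tilde s$ in coordinates $N$, and apply (J-EXC) to obtain an $(\tilde x + \tilde s, \tilde y)$-increment $\tilde t$ with $\tilde x + \tilde s + \tilde t \in \tilde S$ and $\tilde y - \tilde s - \tilde t \in \tilde S$. Because the zeroth coordinate is rigidly determined by $\pi$, the combined move $\tilde s + \tilde t$ must change the zeroth coordinate by an amount matching the induced parity change; this pins down whether $\tilde t = \pm\unitvec{0}$ (yielding alternative (i) of (J$\sp{\natural}$-EXC), with the net effect on the $N$-coordinates being just $s$) or $\tilde t = \pm\unitvec{j}$ for some $j \in N$ (yielding alternative (ii), with $t$ the projection of $\tilde t$). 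Projecting back to $\ZZ\sp{n}$ then gives exactly the conclusion of (J$\sp{\natural}$-EXC).

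The main obstacle I anticipate is the bookkeeping around the zeroth coordinate, specifically verifying in each case that $\tilde s$ (or $\tilde t$) is a \emph{valid} increment in the sense of lying in the box $[\tilde x \wedge \tilde y, \tilde x \vee \tilde y]_{\ZZ}$, not merely that it flips the right parity. Since $\tilde x_0 = \pi(x)$ and $\tilde y_0 = \pi(y)$ are each either $0$ or $1$, the admissible zeroth-coordinate moves are constrained: a $+\unitvec{0}$ step is only an $(\tilde x,\tilde y)$-increment when $\tilde x_0 < \tilde y_0$, i.e.\ $\pi(x)=0, \pi(y)=1$. I would therefore need to check that whenever the $S$-side alternative forces a parity correction, the parities of the relevant endpoints are aligned so that the required $\pm\unitvec{0}$ genuinely stays inside the box $[\tilde x \wedge \tilde y, \tilde x \vee \tilde y]_{\ZZ}$; this is where a clean parity computation, relating the parity of $x(N)-y(N)$ to the direction of $s$, does the essential work and must be handled without sign errors.
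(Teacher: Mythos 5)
Your dictionary between increments in $S$ and in $\tilde S$ is the right framework, and your ``if'' direction (deducing (J$\sp{\natural}$-EXC) for $S$ from (J-EXC) for $\tilde S$) is essentially the paper's Lemma~\ref{LMjmjmnatset} and is fine. The genuine gap is in the ``only if'' direction, where you propose to verify the full two-sided axiom (J-EXC) for $\tilde S$ by a direct case analysis, exhibiting in each sub-case a $\tilde t$ with \emph{both} $\tilde x+\tilde s+\tilde t\in\tilde S$ and $\tilde y-\tilde s-\tilde t\in\tilde S$. One application of (J$\sp{\natural}$-EXC) per case does not produce such a $\tilde t$. Two concrete failure points: (a) when $\tilde s=\pm\unitvec{0}$ (so $s=0$ and $\pi(x)\neq\pi(y)$), you need a single $(x,y)$-increment $t$ with $x+t\in S$ (and, for the two-sided version, also $y-t\in S$); but (J$\sp{\natural}$-EXC) applied to $(x,y)$ and an increment may return alternative~(ii), a \emph{two-step} exchange $x+s+t\in S$, which is not of the required one-step form. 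Extracting a one-step move requires an induction on $\|x-y\|_{1}$ (the paper's ``Claim~A''), and your proposal contains no such induction. (b) In the sub-case $s\neq 0$, $x+s\in S$, $\pi(x)\neq\pi(y)$, the natural choice $\tilde t=(t_{0},0)$ gives $\tilde x+\tilde s+\tilde t\in\tilde S$, but $\tilde y-\tilde s-\tilde t\in\tilde S$ forces $y-s\in S$, which nothing in your argument guarantees: (J$\sp{\natural}$-EXC) may well have placed you in alternative~(ii), where $y-s\notin S$ is possible.

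The paper sidesteps exactly this difficulty: its Lemma~\ref{LMjnatmjmset} proves only the one-sided axiom (J-EXC$_{+}$) for $\tilde S$ (membership required only on the $\tilde x$ side), and then invokes the nontrivial equivalence of (J-EXC$_{+}$) and (J-EXC) for constant-parity systems from \cite[Lemma~2.1]{Mmjump06}. If you insist on proving the two-sided (J-EXC) directly, you would in effect be reproving that equivalence. The fix is to restructure the ``only if'' direction to target (J-EXC$_{+}$), add the inductive claim for the parity-only and same-parity sub-cases, and cite the equivalence to conclude.
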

\begin{proof}
The proof is given in  Section~\ref{SCproofjnatset}.
\end{proof}

For any function 
$f: \ZZ\sp{n} \to \RR \cup \{ +\infty \}$,
we associate a function 
$\tilde f: \ZZ\sp{n+1} \to \RR \cup \{ +\infty \}$
defined by
\begin{equation}  \label{ftilde}
\tilde f(x_{0}, x)= 
   \left\{  \begin{array}{ll}
   f(x)            &   (x_{0} = \pi(x)) ,      \\
   + \infty      &   (\mbox{\rm otherwise}) , \\
                      \end{array}  \right.
\end{equation}
where $x_{0} \in \ZZ$ and $x \in \ZZ\sp{n}$.
Note that $\dom \tilde f$ is a constant-parity system,
and it coincides with $\tilde S$ associated with $S = \dom f$ 
as in \eqref{Stilde}.

The following theorem is a main result of this paper, showing that 
$f$ is jump \Mnat-convex 
if and only if
$\tilde f$ is jump M-convex.

\begin{theorem} \label{THjmnatfn}
$f$ satisfies {\rm (J\Mnat-EXC)} if and only if 
$\tilde f$ satisfies {\rm (JM-EXC)}.
That is,
$f$ is jump \Mnat-convex 
if and only if
$\tilde f$ is jump M-convex.
\end{theorem}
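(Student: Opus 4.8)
The plan is to prove the equivalence by relating the exchange axioms (J\Mnat-EXC) for $f$ and (JM-EXC) for $\tilde f$ directly, exploiting the parity bookkeeping encoded in the extra coordinate $x_0 = \pi(x)$. The key observation is that, since every point of $\dom \tilde f$ has its zeroth coordinate pinned to the parity of the remaining sum, any single increment in the $x_1,\dots,x_n$ coordinates flips $\pi$ and must be accompanied by a compensating $\pm\unitvec{0}$ step, whereas a two-step move in the last $n$ coordinates preserves $\pi$ and needs no change in $x_0$. This is exactly why case (i) of (J\Mnat-EXC) (a single increment $s$, with the partners $x+s$, $y-s$) corresponds to a two-step move in $\tilde f$ that uses the zeroth coordinate, and case (ii) (a genuine increment pair $(s,t)$) corresponds to a two-step move that leaves $x_0$ untouched.

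**First** I would set up the forward direction, assuming $f$ is jump \Mnat-convex and verifying (JM-EXC) for $\tilde f$. Take $(x_0,x),(y_0,y)\in\dom\tilde f$, so $x_0=\pi(x)$ and $y_0=\pi(y)$, and let $\tilde s$ be an $((x_0,x),(y_0,y))$-increment. There are two cases according to whether $\tilde s$ acts on the zeroth coordinate or on one of the coordinates in $N$. If $\tilde s=\pm\unitvec{0}$, then because $x_0,y_0\in\{0,1\}$ and $(x_0,x),(y_0,y)$ both lie in $\dom\tilde f$, this forces $x_0\neq y_0$, i.e. $\pi(x)\neq\pi(y)$; I would then apply (J\Mnat-EXC) to $f$ with a suitable increment $s$ in $N$, and package its conclusion — whether it is the single-step alternative (i) or the two-step alternative (ii) — into a legal two-step move $\tilde s+\tilde t$ for $\tilde f$, choosing $\tilde t$ so that the zeroth coordinate lands on the correct parity value. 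If instead $\tilde s=\pm\unitvec{i}$ for $i\in N$, I set $s=\pm\unitvec{i}$ as the corresponding $(x,y)$-increment for $f$ and run (J\Mnat-EXC); alternative (ii) gives an increment pair $(s,t)$ in $N$ that transplants verbatim to $\tilde f$ (parities automatically match), while alternative (i) gives a single increment whose parity flip I absorb by a second step along $\pm\unitvec{0}$. In every case the inequality transfers because $\tilde f$ agrees with $f$ on $\dom\tilde f$ and the zeroth-coordinate moves are ``free'' (they change no function value).

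**The converse** assumes $\tilde f$ jump M-convex and derives (J\Mnat-EXC) for $f$. Given $x,y\in\dom f$ and an $(x,y)$-increment $s$, I lift to $(x_0,x),(y_0,y)\in\dom\tilde f$ and feed $\tilde s=(0,s)$ (an increment in coordinate $i\in N$) into (JM-EXC), obtaining an increment pair $(\tilde s,\tilde t)$ with the two-point inequality. The partner $\tilde t$ either acts in $N$ — yielding case (ii) of (J\Mnat-EXC) with $t$ the corresponding increment — or acts on the zeroth coordinate, in which case $\tilde s+\tilde t$ changes only the $i$th coordinate of $x$ by one, and one checks that the resulting points are $(x+s,x_0+\text{(correction)},\dots)$ with the $N$-part equal to $x+s$ and $y-s$, giving case (i). The main point requiring care is confirming that $\tilde t$ cannot escape the set of legal increments in a way that breaks the correspondence: because both endpoints of each step must remain in $\dom\tilde f$, the zeroth coordinate is always forced to the parity dictated by the $N$-coordinates, so the bookkeeping is consistent.

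**The main obstacle** I anticipate is the careful case analysis of which coordinate the increments $\tilde s,\tilde t$ act on, together with verifying that the box-membership constraints defining ``$(x,y)$-increment'' are respected after the parity coordinate is appended — in particular, ensuring that whenever (J\Mnat-EXC) returns alternative (i) the appended $\pm\unitvec{0}$ step genuinely stays inside $[(x_0,x)\wedge(y_0,y),(x_0,x)\vee(y_0,y)]_{\ZZ}$, which relies on $x_0\neq y_0$ in precisely those situations. Organizing these cases cleanly, rather than any single hard inequality, is where the real work lies; the function-value inequalities \eqref{jumpmexc1} and \eqref{jumpmexc2} then follow immediately since $\tilde f$ restricts to $f$ and the parity-coordinate adjustments contribute nothing.
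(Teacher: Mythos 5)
Your converse direction (jump M-convexity of $\tilde f$ implies (J\Mnat-EXC) for $f$) is correct and coincides with the paper's Lemma~\ref{LMjmjmnatfn}. The forward direction, however, has a genuine gap at exactly the point you flag and then dismiss. When $\tilde s=(0,s)$ with $s=\pm\unitvec{i}$, $i\in N$, and (J\Mnat-EXC) for $f$ returns only alternative (i), your plan is to append a step $\tilde t=(\pm 1,\veczero)$ in the zeroth coordinate. That step is a legal $(\tilde x+\tilde s,\tilde y)$-increment only if $x_{0}\neq y_{0}$, i.e., $\pi(x)\neq\pi(y)$. You assert that this holds ``in precisely those situations,'' but nothing in (J\Mnat-EXC) guarantees it: the axiom may deliver only the one-step inequality $f(x)+f(y)\geq f(x+s)+f(y-s)$ even when $\pi(x)=\pi(y)$, and then no compensating zeroth-coordinate move exists, while the two-step inequality you actually need within the $N$-coordinates is not supplied by that single application of the axiom. (A symmetric problem occurs for $\tilde s=\pm\unitvec{0}$: if the axiom returns alternative (ii), you obtain a two-step move in $N$, which together with the forced $\unitvec{0}$ step is three steps, too many for (JM-EXC).)

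This is not a bookkeeping issue but the mathematical core of the theorem. The paper does not verify (JM-EXC) for $\tilde f$ directly: it first shows that $\dom\tilde f$ is a constant-parity jump system (Theorem~\ref{THjnatset}), then invokes the local characterization of jump M-convexity (Lemma~\ref{LMjmlocex}), reducing the task to pairs $\tilde x,\tilde y$ with $\|\tilde x-\tilde y\|_{1}=4$. The troublesome case $\pi(x)=\pi(y)$, $\|x-y\|_{1}=4$ is then settled by proving the four-point inequality \eqref{mnatproof3} by contradiction, using potentials obtained from LP duality on the perfect-matching polytope of a four-vertex graph together with several applications of (J\Mnat-EXC) to \emph{different} pairs of points (such as $(x,\,x+s_{1}+s_{2}+s_{3})$ and $(y,\,x+s_{1})$). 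Some argument of this kind, or an equivalent substitute, is indispensable; your proposal does not contain one.
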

\begin{proof}
The proof is given in  Section~\ref{SCproofjmnatfn}.
\end{proof}

This theorem enables us to translate all results 
known for jump M-convex functions
\cite{KM07jumplink,KMT07jump,Mmjump06,MT06steep,ST07jump} 
to those for jump \Mnat-convex functions.
In Section~\ref{SCoperjmnat} we will investigate 
fundamental operations for jump \Mnat-convex functions
on the basis of the results of  \cite{KMT07jump} 
for jump M-convex functions.

%%%%%%%%%%%%%%%%%%%%%%%%%%%%%%%%%%%%%%%%%%%%%
\subsection{Proof of of Theorem~\ref{THjnatset}}
\label{SCproofjnatset}

The proof of Theorem~\ref{THjnatset} is provided here.
Recall from (\ref{Stilde}) that
\begin{equation}  \label{Stilde2}
 \tilde S = \{ (x_{0}, x) \in \ZZ \times \ZZ\sp{n} \mid
  x_{0} = \pi(x), x \in S \},
\end{equation}  
where $\pi(x)=0$ if $x(N)$ is even, and
$\pi(x)=1$ if $x(N)$ is odd.
The set $\tilde S$ determines $S$ uniquely.

The ``if'' part is stated in Lemma \ref{LMjmjmnatset} below,
whereas the ``only if'' part follows from Lemma \ref{LMjnatmjmset} 
together with the equivalence 
of (J-EXC) and (J-EXC$_{+}$)
given in \cite[Lemma~2.1]{Mmjump06}.

%%%%%%%%%%%%
\begin{lemma}   \label{LMjmjmnatset}
If $\tilde S$ satisfies {\rm (J-EXC)},
then $S$ satisfies {\rm (J$\sp{\natural}$-EXC)}.
\end{lemma}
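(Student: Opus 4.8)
The plan is to prove the contrapositive-free direction directly: assuming $\tilde S$ satisfies (J-EXC), I want to verify (J$^{\natural}$-EXC) for $S$. Fix $x, y \in S$ and an $(x,y)$-increment $s$. The natural first step is to lift $x$ and $y$ to $\tilde S$, setting $\tilde x = (\pi(x), x)$ and $\tilde y = (\pi(y), y)$, which lie in $\tilde S$ by construction. I then need to manufacture an appropriate $(\tilde x, \tilde y)$-increment in $\ZZ^{n+1}$ from the given $s$, apply (J-EXC) to obtain a matching increment $\tilde t$, and finally project back down to $\ZZ^{n}$ to read off the desired conclusion (i) or (ii) for $S$.

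First I would examine the relationship between the zeroth coordinate $x_0 = \pi(x)$ and the increment $s$. Adding $s = \pm \unitvec{i}$ (in the $\ZZ^n$ part) flips the parity of the component sum, so moving from $x$ to $x+s$ changes $\pi$ by one. The key bookkeeping observation is that $\pi(x+s) = 1 - \pi(x)$, meaning the lift of $x+s$ into $\tilde S$ must also adjust its zeroth coordinate. Concretely, I expect the correct $(\tilde x, \tilde y)$-increment to be $\tilde s = (\sigma, s)$, where $\sigma = \pm 1$ is chosen so that the zeroth coordinate of $\tilde x + \tilde s$ matches $\pi(x+s)$ and simultaneously stays inside the box $[\tilde x \wedge \tilde y, \tilde x \vee \tilde y]_{\ZZ}$ in the zeroth coordinate. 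This sign choice, and checking that it genuinely yields an increment (i.e. that $\tilde x + \tilde s$ lies in the relevant integer box), is where the argument requires care.

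Once (J-EXC) delivers an $(\tilde x + \tilde s, \tilde y)$-increment $\tilde t = (\tau, t)$ with $\tilde x + \tilde s + \tilde t \in \tilde S$ and $\tilde y - \tilde s - \tilde t \in \tilde S$, I would split into cases according to whether $\tilde t$ acts on the zeroth coordinate ($t = \veczero$, $\tau = \pm 1$) or on one of the original $n$ coordinates ($\tau = 0$, $t = \pm \unitvec{j}$). In the first case the net effect on the $\ZZ^n$ part is just $s$, and projecting shows $x + s \in S$ and $y - s \in S$, giving alternative (i) of (J$^{\natural}$-EXC); the parity coordinate must balance correctly precisely because $\tilde x + \tilde s + \tilde t$ returns to the diagonal $x_0 = \pi$. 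In the second case $t$ is a genuine $(x+s, y)$-increment and projection yields $x + s + t \in S$, $y - s - t \in S$, which is alternative (ii).

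The main obstacle I anticipate is the parity and box-membership bookkeeping in choosing $\sigma$: I must show there is always a valid sign making $\tilde s$ an honest increment pointing from $\tilde x$ toward $\tilde y$, and that the two cases for $\tilde t$ exhaust the possibilities and map cleanly onto (i) and (ii). In particular, verifying that when $\tilde t$ is the parity-only move the resulting $s$ already satisfies the stronger simultaneous condition $x+s, y-s \in S$ — rather than merely some weaker statement — hinges on the fact that $\tilde y - \tilde s - \tilde t$ lands back on the diagonal, forcing $\pi(x+s)$ and $\pi(y-s)$ to be consistent. Handling these parity constraints correctly, and confirming that no increment is lost or spuriously created in passing between dimensions $n$ and $n+1$, is the crux of the lemma.
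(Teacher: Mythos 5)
There is a genuine flaw in your setup: the lifted increment you propose, $\tilde s = (\sigma, s)$ with $\sigma = \pm 1$, is not an increment at all. By definition an $(\tilde x,\tilde y)$-increment is a \emph{signed unit vector} of $\ZZ^{n+1}$, i.e.\ it has exactly one nonzero coordinate, whereas $(\sigma,s)$ has two. Moreover, the motivation you give for introducing $\sigma$ rests on a misreading of (J-EXC): the axiom does not require the intermediate point $\tilde x + \tilde s$ to lie in $\tilde S$ (in a constant-parity system a single step never stays in the set), so there is no need to ``adjust the zeroth coordinate so that it matches $\pi(x+s)$.'' And if you nevertheless insisted on moving the zeroth coordinate, the obstacle you yourself flag is fatal rather than merely delicate: when $\pi(x) = \pi(y)$ the box $[\tilde x \wedge \tilde y, \tilde x \vee \tilde y]_{\ZZ}$ is degenerate in the zeroth coordinate, so no choice of $\sigma$ could yield a valid increment there.

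The correct (and simpler) choice, which is what the paper makes, is $\tilde s := (0, s)$. This is automatically an $(\tilde x,\tilde y)$-increment, because $s$ is an $(x,y)$-increment and the zeroth coordinate of $\tilde x + \tilde s$ equals $\pi(x)$, which certainly lies between $\pi(x)$ and $\pi(y)$. From that point on your case analysis is exactly right and coincides with the paper's: (J-EXC) supplies an $(\tilde x + \tilde s, \tilde y)$-increment $\tilde t = (t_{0}, t)$ with $\tilde x + \tilde s + \tilde t \in \tilde S$ and $\tilde y - \tilde s - \tilde t \in \tilde S$; if $t = \veczero$ the parity coordinate absorbs the second step and projection gives $x+s \in S$ and $y-s \in S$ (alternative (i)), while if $t \neq \veczero$ then $t$ is an $(x+s,y)$-increment and projection gives $x+s+t \in S$ and $y-s-t \in S$ (alternative (ii)). So the lemma does go through along your general lines, but only after replacing $(\sigma,s)$ by $(0,s)$; as written, the first step of your argument cannot be executed.
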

\begin{proof}
Let $x, y \in S$ and $s$ be an $(x,y)$-increment.
Let $\tilde x := (\pi(x), x)$, $\tilde y := (\pi(y), y)$, and $\tilde s := (0,s)$,
where $\tilde s$ is an $(\tilde x, \tilde y)$-increment.
By (J-EXC) for $\tilde S$
there exists an $(\tilde x + \tilde s,\tilde y)$-increment $\tilde t=(t_{0},t)$
such that
$\tilde x + \tilde s + \tilde t \in \tilde S$ and
$\tilde y -\tilde s - \tilde t \in \tilde S$.
If $t=0$, then $x+s \in S$ and $y-s \in S$.
If $t\not=0$, then $t$ is an $(x+s,y)$-increment, 
and $x+s+t \in S$ and $y-s-t \in S$.
Thus $S$ satisfies {\rm (J$\sp{\natural}$-EXC)}.
\end{proof}

%%%%%%%%%%%%
\begin{lemma}   \label{LMjnatmjmset}
If $S$ satisfies {\rm (J$\sp{\natural}$-EXC)},
then $\tilde S$ satisfies {\rm (J-EXC$_{+}$)}.
\end{lemma}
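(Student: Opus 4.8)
The plan is to prove Lemma~\ref{LMjnatmjmset} directly from the definitions by translating each step of the hypothesis (J$\sp{\natural}$-EXC) for $S$ into a verification of (J-EXC$_{+}$) for $\tilde S$. Recall that (J-EXC$_{+}$) requires: for any $\tilde x, \tilde y \in \tilde S$ and any $(\tilde x, \tilde y)$-increment $\tilde s$, there exists an $(\tilde x + \tilde s, \tilde y)$-increment $\tilde t$ with $\tilde x + \tilde s + \tilde t \in \tilde S$. First I would fix $\tilde x = (\pi(x), x)$ and $\tilde y = (\pi(y), y)$ in $\tilde S$ (so $x, y \in S$), together with an arbitrary $(\tilde x, \tilde y)$-increment $\tilde s$. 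The crux is to do a case analysis on whether $\tilde s$ touches the $0$th coordinate.

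The hard part, and the one requiring care, is the case $\tilde s = \pm \unitvec{0}$ (changing only the auxiliary coordinate). Here $\tilde s$ does not correspond to any $(x,y)$-increment in the original space, so I cannot invoke (J$\sp{\natural}$-EXC) directly; instead I must produce $\tilde t$ by hand. In this case $\tilde x + \tilde s = (\pi(x) \pm 1, x)$, whose $0$th coordinate no longer matches $\pi(x)$, so $\tilde x + \tilde s \notin \tilde S$; I would take $\tilde t = \pm \unitvec{0}$ as well (the same sign), giving $\tilde x + \tilde s + \tilde t = (\pi(x) \pm 2, x)$ — but this still fails the parity constraint unless I instead pair the $0$th-coordinate move with a genuine coordinate move. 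The cleaner route is to observe that since $\tilde s$ is an $(\tilde x, \tilde y)$-increment in the $0$th coordinate, we must have $\pi(x) \neq \pi(y)$, i.e.\ $x(N)$ and $y(N)$ have different parities; hence there is some index $i \in N$ with $x_i \neq y_i$, and I can choose an $(x,y)$-increment $s$ in that coordinate, set $\tilde t = (0, s)$, and check that $\tilde x + \tilde s + \tilde t = (\pi(x)\pm 1, x + s) = (\pi(x+s), x+s)$ lies in $\tilde S$ because $x + s \in [x \wedge y, x \vee y]_{\ZZ} \subseteq$ the relevant box and, crucially, flipping one coordinate of $x$ changes $\pi$ by exactly one.

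For the complementary case, where $\tilde s = (0, s)$ with $s$ a genuine $(x,y)$-increment, I would apply the hypothesis (J$\sp{\natural}$-EXC) to $x, y, s$. This yields two subcases: either (i) $x + s \in S$, in which case $(\pi(x+s), x+s) \in \tilde S$ and I claim $\tilde t = \pm \unitvec{0}$ (the sign chosen so that the $0$th coordinate becomes $\pi(x+s)$) is a valid $(\tilde x + \tilde s, \tilde y)$-increment completing the exchange; or (ii) there is an $(x+s,y)$-increment $t$ with $x + s + t \in S$, whence I take $\tilde t = (0, t)$ and verify that $(\pi(x+s+t), x+s+t) = \tilde x + \tilde s + \tilde t \in \tilde S$, using that $\pi(x+s+t) = \pi(x)$ differs from $\pi(x) \pm 1$ appropriately — here I must confirm the parity bookkeeping, namely that adding the two increments $s, t$ returns the component-sum parity to that of $x$, matching the $0$th coordinate $\pi(x)$ already present in $\tilde x$ and unchanged by $\tilde s = (0,s)$.

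The main obstacle throughout is the parity arithmetic: each unit increment $\pm \unitvec{i}$ in $N$ flips $\pi$, so I must track, in every branch, whether the number of genuine ($N$-coordinate) moves is even or odd and ensure the $0$th coordinate of the resulting vector equals the parity of its $N$-part. The subtle point is verifying that each proposed $\tilde t$ is actually an $(\tilde x + \tilde s, \tilde y)$-increment, i.e.\ that it keeps the vector inside the box $[\tilde x \wedge \tilde y, \tilde x \vee \tilde y]_{\ZZ}$; for the $0$th coordinate this reduces to checking that the target value lies between $\pi(x)$ and $\pi(y)$, which is automatic when those parities differ. Once the case analysis and parity checks are organized, the verification in each branch is routine.
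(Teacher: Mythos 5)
Your case analysis is organized correctly, but there is a genuine gap, and it occurs at exactly the point you flag as "the hard part." In the case $\tilde s = \pm\unitvec{0}$, you choose an arbitrary $(x,y)$-increment $s$ in a coordinate where $x_i \neq y_i$ and assert that $\tilde x + \tilde s + \tilde t = (\pi(x+s), x+s)$ lies in $\tilde S$, justifying this only by the box constraint and the parity bookkeeping. But membership in $\tilde S$ requires $x+s \in S$ by the definition \eqref{Stilde}, and nothing guarantees this for an arbitrarily chosen increment. What is actually needed is the existence of \emph{some} $(x,y)$-increment $s$ with $x+s \in S$ whenever $\pi(x) \neq \pi(y)$. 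This is not immediate from (J$\sp{\natural}$-EXC) applied once: the axiom may return you only the conclusion (ii) with $x+s+t \in S$ (two steps away, same parity as $x$), not a single step. The paper isolates this as a separate claim and proves it by induction on $\|x-y\|_1$: if $x+s \notin S$, (J$\sp{\natural}$-EXC) gives an $(x+s,y)$-increment $t$ with $y-s-t \in S$; then $y-s-t$ still has parity different from $x$ and is strictly closer to $x$, so the induction hypothesis applies. This inductive argument is the key idea your proposal is missing.

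The same omission resurfaces in your subcase $\tilde s = (0,s)$ with $x+s \in S$, where you propose $\tilde t = \pm\unitvec{0}$. That $\tilde t$ is a valid $(\tilde x + \tilde s, \tilde y)$-increment only when $\pi(x) \neq \pi(y)$; if $\pi(x) = \pi(y)$, the $0$th coordinates of $\tilde x$ and $\tilde y$ coincide, the box is degenerate in that coordinate, and no $0$th-coordinate move is permitted. You note that the box check is "automatic when those parities differ" but never treat the case where they agree. In that case one must again find a genuine $(x+s,y)$-increment $t$ with $x+s+t \in S$, which follows from the same single-step claim applied to the pair $(x+s, y)$ (whose parities do differ). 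So both problematic branches are repaired by the one inductive lemma; without it, the proof does not close.
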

\begin{proof}
(This proof is due to J. Geelen \cite{Gee96prcom}.)
%%% Revised 2019-07-06 to clarify the details %%%
\\
\noindent
{Claim A:} For $x,y \in S$ with $\pi(x) \not= \pi(y)$,
there exists an $(x,y)$-increment $s$ such that $x+s \in S$.

This claim can be proved by induction on $\| x-y\| _{1}$.
Let $s$ be an $(x,y)$-increment. If $x+s \in S$, we are done.
Otherwise, by (J$\sp{\natural}$-EXC), there exists 
an $(x+s,y)$-increment $t$ with $y-s-t \in S$.
Since $\pi(x) \not= \pi(y-s-t)$ and $\| x-(y-s-t)\| _{1} <  \| x-y\| _{1} $,
there exists, by induction, 
an $(x,y-s-t)$-increment $s'$ such that $x+s' \in S$,
where $s'$ is obviously an $(x,y)$-increment.
Thus the claim is established.

Let $\tilde x, \tilde y \in \tilde S$ and
$\tilde s=(s_{0}, s)$ be an $(\tilde x, \tilde y)$-increment.
We have $\tilde x =(\pi(x),x)$ and $\tilde y =(\pi(y),y)$ 
with $x,y \in S$.
If $s=0$, then $\pi(x) \not= \pi(y)$ and the claim shows
the existence of an $(x,y)$-increment $t$ with $x+t \in S$.
Let $z:=x+t$, $\tilde z :=(\pi(z),z)$ and $\tilde t :=(0,t)$. 
Then $\tilde t$ is an $(\tilde x + \tilde s, \tilde y)$-increment 
and  
$\tilde z = \tilde x + \tilde s + \tilde t$.
Moreover we have $\tilde z \in \tilde S$,
since $z = x+t \in S$.
In what follows we assume $s\not=0$ and hence $s_{0}=0$.

Suppose that $x+s \in S$.
If $\pi(x)\not=\pi(y)$, we can take
an $(\tilde x + \tilde s, \tilde y)$-increment 
$\tilde t = (t_{0},0)$ with $t_{0} \not=0$,
for which
$\tilde x + \tilde s + \tilde t  = (\pi(x) + t_{0},x+s) 
 = (\pi(x+s),x+s) \in \tilde S$.
Otherwise ($\pi(x)=\pi(y)$), we apply Claim A to $(x+s,y)$
to obtain an $(x+s,y)$-increment $t$ with $x+s+t \in S$.
Then $\tilde t :=(0,t)$ is an $(\tilde x + \tilde s, \tilde y)$-increment
and 
$\tilde x + \tilde s + \tilde t 
= (\pi(x), x+s+t) = (\pi(x+s+t), x+s+t)\in \tilde S$.

Finally suppose that $x+s \not \in S$.
It follows from (J$\sp{\natural}$-EXC) that 
there exists an $(x+s,y)$-increment $t$ with $x+s+t \in S$.
Then $\tilde t =(0,t)$ is an $(\tilde x + \tilde s, \tilde y)$-increment
and $\tilde x + \tilde s + \tilde t \in \tilde S$.
\end{proof}

%%%%%%%%%%%%%%%%%%%%%%%%%%%%%%%%%%%%%%%%%%%%%
\subsection{Proof of Theorem~\ref{THjmnatfn}}
\label{SCproofjmnatfn}	

The proof of Theorem~\ref{THjmnatfn} is provided here.
Recall from (\ref{ftilde}) that
\begin{equation}  \label{ftilde2}
\tilde f(x_{0}, x)= 
   \left\{  \begin{array}{ll}
   f(x)            &   (x_{0} = \pi(x)) ,      \\
   + \infty      &   (\mbox{\rm otherwise}) . \\
                      \end{array}  \right.
\end{equation}
Let $S = \dom f$ and $\tilde S = \dom \tilde f$.

The ``if'' part is established in Lemma \ref{LMjmjmnatfn} below.

%%%%%%%%%%%%
\begin{lemma}   \label{LMjmjmnatfn}
If $\tilde f$ satisfies {\rm (JM-EXC)},
then $f$ satisfies {\rm (J\Mnat-EXC)}.
\end{lemma}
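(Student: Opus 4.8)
The plan is to mimic the set-level argument of Lemma~\ref{LMjmjmnatset}, lifting a putative exchange for $f$ to one for $\tilde f$ and then reading the conclusion back down. First I would fix arbitrary $x, y \in \dom f$ and an $(x,y)$-increment $s$, and form the lifted data $\tilde x := (\pi(x), x)$, $\tilde y := (\pi(y), y)$, which lie in $\dom \tilde f$, together with $\tilde s := (0, s)$. A short check shows that $\tilde s$ is an $(\tilde x, \tilde y)$-increment: its zeroth coordinate equals that of $\tilde x$ and so trivially satisfies the box condition, while in the remaining coordinates the box condition is exactly the one witnessing that $s$ is an $(x,y)$-increment.

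Next I would invoke (JM-EXC) for $\tilde f$ at $\tilde x, \tilde y, \tilde s$ to obtain an $(\tilde x + \tilde s, \tilde y)$-increment $\tilde t = (t_{0}, t)$ with $\tilde x + \tilde s + \tilde t \in \dom \tilde f$, $\tilde y - \tilde s - \tilde t \in \dom \tilde f$, and $\tilde f(\tilde x) + \tilde f(\tilde y) \ge \tilde f(\tilde x + \tilde s + \tilde t) + \tilde f(\tilde y - \tilde s - \tilde t)$. Since $\tilde t$ is $\pm$ a single unit vector of $\ZZ\sp{n+1}$, exactly one coordinate is nonzero: either it is the zeroth one (so $t = 0$ and $t_{0} = \pm 1$) or it lies among $1, \dots, n$ (so $t_{0} = 0$ and $t = \pm \unitvec{i}$). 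This dichotomy splits the argument into the two cases, which I expect to correspond precisely to alternatives (i) and (ii) of (J\Mnat-EXC).

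In the case $t = 0$, the point $\tilde x + \tilde s + \tilde t = (\pi(x) + t_{0}, x + s)$ lies in $\dom \tilde f = \tilde S$, and by the definition of $\tilde S$ this forces $x + s \in \dom f$ with $\tilde f(\tilde x + \tilde s + \tilde t) = f(x+s)$; likewise $y - s \in \dom f$ and $\tilde f(\tilde y - \tilde s - \tilde t) = f(y-s)$. The lifted inequality then reads $f(x) + f(y) \ge f(x+s) + f(y-s)$, which is alternative (i). In the case $t \ne 0$, projecting the increment condition for $\tilde t$ onto the last $n$ coordinates shows that $t$ is an $(x+s, y)$-increment, and the value identities $\tilde f(\tilde x + \tilde s + \tilde t) = f(x+s+t)$ and $\tilde f(\tilde y - \tilde s - \tilde t) = f(y-s-t)$ turn the lifted inequality into \eqref{jumpmexc2}, which is alternative (ii).

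The argument is essentially bookkeeping, and the one point that needs genuine care is the parity accounting underlying the identification $\dom \tilde f = \tilde S$. Concretely, I must use that a single increment $s$ flips the parity $\pi$ whereas a double increment $s+t$ preserves it, so that $(\pi(x) + t_{0}, x+s)$ in the first case and $(\pi(x), x+s+t)$ in the second each automatically meet the parity constraint defining $\tilde S$. This is the only place where the definitions of $\pi$ and of $\tilde f$ are really invoked, and it is exactly what guarantees that the alternatives $t = 0$ and $t \ne 0$ match up with (i) and (ii) of (J\Mnat-EXC).
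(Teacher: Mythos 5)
Your proposal is correct and follows essentially the same route as the paper's proof: lift $x,y,s$ to $\tilde x,\tilde y,\tilde s=(0,s)$, invoke (JM-EXC) for $\tilde f$ to get $\tilde t=(t_0,t)$, and split on $t=0$ versus $t\neq 0$ to recover alternatives (i) and (ii) of (J\Mnat-EXC). The only difference is that you spell out the increment and parity bookkeeping that the paper leaves implicit, which is fine.
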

\begin{proof}
Let $x, y \in S$ and $s$ be an $(x,y)$-increment.
Let $\tilde x := (\pi(x), x)$, $\tilde y := (\pi(y), y)$, and $\tilde s := (0,s)$,
where $\tilde s$ is an $(\tilde x, \tilde y)$-increment.
By (JM-EXC) for $\tilde f$
there exists an $(\tilde x + \tilde s,\tilde y)$-increment $\tilde t=(t_{0},t)$
such that
$\tilde x + \tilde s + \tilde t \in \tilde S$,
$\tilde y -\tilde s - \tilde t \in \tilde S$, and
\[
  \tilde f( \tilde x ) + \tilde f( \tilde y )
 \geq
  \tilde f( \tilde x +\tilde s + \tilde t \, )
+ \tilde f( \tilde y -\tilde s - \tilde t \, ).
\]
If $t=0$, then $x+s \in S$, $y-s \in S$, and \eqref{jumpmexc1} holds,
which is the case (i) in (J\Mnat-EXC).
If $t\not=0$, then $t$ is an $(x+s,y)$-increment, 
$x+s+t \in S$, $y-s-t \in S$, and \eqref{jumpmexc2} holds,
which is the case (ii) in (J\Mnat-EXC).
Thus $f$ satisfies {\rm (J\Mnat-EXC)}.
\end{proof}

To prove the ``only if'' part of Theorem~\ref{THjmnatfn},
suppose that $f$ satisfies  (J\Mnat-EXC).
Then $S =\dom f$
 satisfies (J$\sp{\natural}$-EXC),
and hence
$\tilde S$ is a constant-parity jump system by Theorem~\ref{THjnatset}.
The ``only if'' part is established by Lemma \ref{LMjnatmjmfn} below
on the basis of the following local characterization of jump M-convexity.

\begin{lemma}[\protect{\cite[Theorem 2.3]{Mmjump06}}] \label{LMjmlocex}
A function 
$f: \ZZ\sp{n} \to \RR \cup \{ +\infty \}$
is jump M-convex
if and only if $\dom f$ is 
a constant-parity jump system and
$f$ satisfies the local exchange property: 
\begin{description}
\item[(JM-EXC$_{\rm loc}$)] 
For any  $x, y \in \dom f$ with $\|x - y \|_{1} = 4$
there exists an $(x,y)$-increment pair  $(s,t)$ such that
$x+s+t \in \dom f$, \  $y-s-t \in \dom f$, and
\eqref{jumpmexc2} holds.
\end{description}
\vspace{-\baselineskip} 
\finbox
\end{lemma}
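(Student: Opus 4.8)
The ``only if'' part is immediate: if $f$ is jump M-convex then \mbox{(JM-EXC)} holds for all $x,y\in\dom f$ and every $(x,y)$-increment $s$, and $\dom f$ is a constant-parity jump system (as already noted in the excerpt). Specializing \mbox{(JM-EXC)} to a pair with $\|x-y\|_{1}=4$ and any chosen $s$ produces a completing $t$ with $x+s+t,\,y-s-t\in\dom f$ and \eqref{jumpmexc2}, so $(s,t)$ witnesses \mbox{(JM-EXC$_{\rm loc}$)}. The substance is the ``if'' part, and my plan is to prove the global axiom \mbox{(JM-EXC)} for all $x,y\in\dom f$ and all $(x,y)$-increments $s$ by induction on the even number $\|x-y\|_{1}$, assuming only that $\dom f$ is a constant-parity jump system and that \mbox{(JM-EXC$_{\rm loc}$)} holds. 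The standing tool is that, since $\dom f$ satisfies \mbox{(J-EXC)}, for every such $x,y,s$ there is \emph{some} $(x+s,y)$-increment $t$ with $x+s+t\in\dom f$ and $y-s-t\in\dom f$; the entire difficulty is to arrange in addition the value inequality \eqref{jumpmexc2}.

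For the base cases, when $\|x-y\|_{1}=2$ the vector $t:=y-x-s$ is an $(x+s,y)$-increment with $x+s+t=y$ and $y-s-t=x$, so \eqref{jumpmexc2} holds with equality and nothing more is needed. The first genuine obstacle appears already at $\|x-y\|_{1}=4$: the hypothesis \mbox{(JM-EXC$_{\rm loc}$)} only asserts the existence of \emph{one} increment pair $(s_{0},t_{0})$ realizing \eqref{jumpmexc2}, whereas \mbox{(JM-EXC)} demands a completing $t$ for \emph{every} prescribed $s$. I would bridge this gap by using \mbox{(J-EXC)} for $\dom f$ to produce candidate landing points $x+s+t$ and $y-s-t$ for the given $s$, and then bounding their $f$-values against $f(x)+f(y)$ by playing the single guaranteed inequality for $(s_{0},t_{0})$ together with the reflection symmetry of the axiom, noting that the data $(x,y,s,t)$ and $(y,x,-s,-t)$ are interchangeable. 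This forces a short case analysis according to how the prescribed $s$ overlaps the coordinates moved by $(s_{0},t_{0})$.

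For the inductive step ($\|x-y\|_{1}=2m\ge 6$), using \mbox{(J-EXC)} I would choose, among all $(x+s,y)$-increments $t$ with $x+s+t,\,y-s-t\in\dom f$, one minimizing $f(x+s+t)+f(y-s-t)$, call it $t^{*}$, and set $u:=x+s+t^{*}$ and $v:=y-s-t^{*}$, so that $u+v=x+y$ and $\|x-u\|_{1}=\|y-v\|_{1}=2$ (the degenerate choice $t^{*}=-s$ is excluded, since $-s$ is not an $(x+s,y)$-increment). It then suffices to show $f(u)+f(v)\le f(x)+f(y)$. Assuming the contrary, $u$ lies at distance $2m-2$ from $y$ and $v$ at distance $2m-2$ from $x$, so the inductive hypothesis applies to a suitable pair among $(u,y)$ and $(x,v)$; feeding an appropriately chosen intermediate point into that hypothesis and combining the resulting inequality with one application of \mbox{(JM-EXC$_{\rm loc}$)} at a distance-$4$ configuration, I expect to produce an $(x+s,y)$-increment $t'$ with strictly smaller value $f(x+s+t')+f(y-s-t')$, contradicting the minimality of $t^{*}$. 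The main obstacle, both here and in the distance-$4$ case, is precisely the combinatorial bookkeeping: selecting the intermediate point and matching the increments so that the inductive inequality and the local inequality telescope into the desired bound, while keeping every auxiliary point inside $\dom f$ by the jump-system axioms. The equivalence of \mbox{(J-EXC)} and \mbox{(J-EXC$_{+}$)} from \cite[Lemma~2.1]{Mmjump06} is convenient for cutting down the number of directions of $s$ that must be examined.
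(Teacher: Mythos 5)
The paper does not actually prove this lemma: it is quoted verbatim from \cite[Theorem 2.3]{Mmjump06} and used as a black box, so there is no in-paper argument to compare yours against. Judged on its own terms, your proposal is an outline rather than a proof, and what is missing is precisely the part that carries all the content. The ``only if'' direction, the distance-$2$ base case, and the distance-$4$ case are fine; the last is in fact easier than you make it sound, since writing $y=x+s_{1}+s_{2}+s_{3}+s_{4}$, the single guaranteed inequality $f(x)+f(y)\geq f(x+s_{i}+s_{j})+f(x+s_{k}+s_{l})$ with $\{i,j,k,l\}=\{1,2,3,4\}$ is symmetric under swapping the pair $\{i,j\}$ with its complement $\{k,l\}$, so for any prescribed increment $s=s_{m}$ one simply takes $t$ to be the partner of $m$ in whichever of the two pairs contains $m$.

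The genuine gap is the inductive step for $\|x-y\|_{1}=2m\geq 6$. ``I expect to produce an $(x+s,y)$-increment $t'$ with strictly smaller value'' is a statement of hope, not an argument, and the plan as written has a concrete obstruction: if $t^{*}$ minimizes $f(x+s+t)+f(y-s-t)$ and you set $u=x+s+t^{*}$, $v=y-s-t^{*}$, then applying the inductive hypothesis to $(u,y)$ produces points of the form $u+s'+t'$, which lie at $\ell_{1}$-distance $4$ from $x$; these are not of the form $x+s+t'$ and hence are not competitors in the minimization defining $t^{*}$. Applying the hypothesis to $(x,v)$ with the same $s$ is also not available in general, since $s$ need not be an $(x,v)$-increment (the remaining $2m-2$ steps from $x$ to $v$ may contain no further copy of $s$). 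Some additional device --- a second minimization, a different potential, or a chain of local exchanges keeping every intermediate point in $\dom f$ --- is needed to convert the output of the inductive hypothesis into a contradiction with the choice of $t^{*}$, and none is supplied. Since this local-to-global passage is exactly what \cite[Theorem 2.3]{Mmjump06} establishes, the proposal does not yet constitute an independent proof of the lemma.
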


%%%%%%%%%%%%
\begin{lemma}   \label{LMjnatmjmfn}
If $f$ satisfies {\rm (J\Mnat-EXC)},
then $\tilde f$ satisfies {\rm (JM-EXC$_{\rm loc}$)}.
\end{lemma}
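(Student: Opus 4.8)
The plan is to establish (JM-EXC$_{\rm loc}$) for $\tilde f$ and then invoke Lemma~\ref{LMjmlocex}. Fix $\tilde x,\tilde y\in\dom\tilde f$ with $\|\tilde x-\tilde y\|_{1}=4$ and write $\tilde x=(\pi(x),x)$, $\tilde y=(\pi(y),y)$ with $x,y\in\dom f=S$. Since $\dom\tilde f=\tilde S$ is already known to be a constant-parity jump system (Theorem~\ref{THjnatset}), any point I land on lies in $\dom\tilde f$ as soon as its $0$th coordinate matches the parity of its $N$-part, so I only have to arrange the value inequality. Because (JM-EXC$_{\rm loc}$) asks only for the \emph{existence} of one increment pair, I may always take the first increment to be an $N$-step $\tilde s=(0,s)$ with $s$ an $(x,y)$-increment (one exists, as $\|x-y\|_{1}\ge 3$), and then choose $\tilde t$. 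The analysis splits according to the $0$th coordinate, i.e. according to whether $\pi(x)=\pi(y)$.

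First I would treat the easy case $\pi(x)\ne\pi(y)$, where $\|x-y\|_{1}=3$ and the two $0$th coordinates differ. Apply (J\Mnat-EXC) to $f$ at $(x,y)$ with $s$. If alternative (ii) holds I take $\tilde t=(0,t)$: the two $N$-steps preserve parity, so $\tilde x+\tilde s+\tilde t=(\pi(x),x+s+t)=(\pi(x+s+t),x+s+t)\in\tilde S$ and likewise $\tilde y-\tilde s-\tilde t\in\tilde S$, while \eqref{jumpmexc2} transfers verbatim. If alternative (i) holds, a single $N$-step flips the parity, so I complete the pair with the $0$th-coordinate step $\tilde t=(\pi(y)-\pi(x),0)$; this is a legitimate $(\tilde x+\tilde s,\tilde y)$-increment precisely because the $0$th coordinates of $\tilde x+\tilde s$ and $\tilde y$ differ, it lands on $(\pi(x+s),x+s)$ and $(\pi(y-s),y-s)$ in $\tilde S$, and the required inequality is exactly \eqref{jumpmexc1}. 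Thus the auxiliary coordinate absorbs the parity change created by a lone increment.

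The substantive case is $\pi(x)=\pi(y)$, where $\|x-y\|_{1}=4$ and the two $0$th coordinates coincide. Now the $0$th coordinate is pinned, so every increment pair for $\tilde f$ consists of two $N$-steps; equivalently I need alternative (ii) of (J\Mnat-EXC) — a genuine simultaneous double exchange of $f$ — together with its value inequality. If some increment $s$ yields alternative (ii) I am done as above. The difficulty is when every available $s$ yields only alternative (i): this produces inner points $x+s,y-s\in\dom f$ and the one-step bound $f(x)+f(y)\ge f(x+s)+f(y-s)$, which must be upgraded to a two-step bound ending at a point of the correct parity. My plan is to feed the inner points back into (J\Mnat-EXC) and telescope: applying the axiom to opposite-parity pairs such as $(x,y-s)$ and $(x+s,y)$ (each at distance $3$) and to the distance-$2$ pairs joining the inner points, one gets several one-step inequalities whose sum cancels the intermediate values and leaves $f(x)+f(y)\ge f(x+s'+t')+f(y-s'-t')$ for a suitable increment pair $(s',t')$. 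The transparent model is $x_{i}=0,\,y_{i}=4$ in one coordinate: writing $f_{k}$ for the value at the point with $i$th coordinate $k$, (J\Mnat-EXC) for $(0,3)$ forces $f_{1}+f_{2}\le f_{0}+f_{3}$ and for $(1,4)$ forces $f_{2}+f_{3}\le f_{1}+f_{4}$, and adding gives $2f_{2}\le f_{0}+f_{4}$, the double-step bound toward the midpoint. The same cancellation works for a square $x=(0,0),\,y=(2,2)$: from the one-step inequalities at $(x,y)$ and at the inner points $(1,0),(0,1),(1,2),(2,1)$ one derives $f(0,0)+f(2,2)\ge 2f(1,1)$.

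The main obstacle is precisely this last case: converting an alternative-(i) (single) exchange into the two-step exchange that $\tilde f$ demands when the parities agree. The subtlety is to choose the second increment $t'$ and the auxiliary increments in the distance-$3$ and distance-$2$ pairs so that the summed inequalities telescope to a bona fide double step from $x$, and to guarantee that every intermediate lattice point invoked actually lies in $\dom f$; the latter is exactly where the simultaneous exchange property (J$\sp{\natural}$-EXC) of $\dom f$ (and the midpoints it forces) is indispensable. I expect to organize this by the geometric type of $y-x$ — a single coordinate of gap $4$, a $(3,1)$ or $(2,2)$ split over two coordinates, a $(2,1,1)$ or $(1,1,1,1)$ split — or, more uniformly, by a short induction on $\|x-y\|_{1}$ that peels off one increment and reduces to the distance-$3$ building blocks already handled in the opposite-parity case.
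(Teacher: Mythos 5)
Your handling of the case $\pi(x)\neq\pi(y)$ is correct and is essentially the paper's argument: when alternative (i) of (J\Mnat-EXC) fires, the lone $N$-step is compensated by the increment $(\pi(y)-\pi(x),0)$ in the parity coordinate, and when alternative (ii) fires the pair $((0,s),(0,t))$ works directly. The genuine gap is the case $\pi(x)=\pi(y)$, $\|x-y\|_{1}=4$, which is the entire substance of the lemma; there you state a plan rather than a proof. Writing $y=x+s_{1}+s_{2}+s_{3}+s_{4}$ and $f_{\alpha_{1}\alpha_{2}\alpha_{3}\alpha_{4}}=f(x+\sum_{i}\alpha_{i}s_{i})$, what must be established is
\begin{equation*}
f_{0000}+f_{1111}\ \geq\ \min\bigl(f_{1100}+f_{0011},\ f_{1010}+f_{0101},\ f_{1001}+f_{0110}\bigr),
\end{equation*}
and the proposed telescoping of one-step inequalities does not visibly produce it in the generic type $(1,1,1,1)$: a first application of (J\Mnat-EXC) to $(x,y)$ may yield only $f_{0000}+f_{1111}\geq f_{1000}+f_{0111}$, and feeding the odd-parity points $x+s_{1}$, $y-s_{1}$ back into the axiom produces inequalities whose right-hand sides are minima over several of the six midpoints $x+s_{i}+s_{j}$; with three competing perfect pairings there is no canonical order of peeling that makes the intermediate terms cancel. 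Your two worked examples (all four increments in one coordinate, and the $(2,2)$ square) are precisely the degenerate types where the pairing ambiguity collapses, and even there you assume points such as $x+s_{1}$ or $(1,0)$ lie in $\dom f$ rather than extracting that from the axiom.

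The paper closes this case by a different device, which is the idea missing from your proposal: assuming the displayed inequality fails, it invokes Edmonds' perfect matching polytope theorem on the four-vertex graph of admissible midpoints to obtain potentials $p_{1},\dots,p_{4}$ with $f(x+s_{i}+s_{j})\geq p_{i}+p_{j}$ and $\sum_{i}p_{i}$ equal to the minimum pairing value, normalizes $g_{\alpha}=f_{\alpha}-\sum_{i}\alpha_{i}p_{i}$ so that all six midpoint values become nonnegative, and then applies (J\Mnat-EXC) only to distance-$3$ pairs such as $(x,\,x+s_{1}+s_{2}+s_{3})$ and $(y,\,x+s_{1})$ to derive $g_{0000}+g_{1111}\geq 0$, contradicting the assumed failure. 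Unless you either supply this potential/duality step or carry out a complete case analysis over all increment types (including $(1,1,1,1)$) with the domain memberships justified, the proof is incomplete.
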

\begin{proof}
Take $\tilde x=(\pi(x),x)$ and $\tilde y=(\pi(y),y) \in \tilde S$ 
with $\| \tilde x - \tilde y\| _{1} = 4$.
We should find an $(\tilde x,\tilde y)$-increment pair 
$(\tilde s, \tilde t \,)$ 
such that
\begin{equation}  \label{tildefjumpmexc2}
\tilde f(\tilde x) + \tilde f(\tilde y)  
\geq  
\tilde f(\tilde x +\tilde s+\tilde t \,)  
+ \tilde f(\tilde y-\tilde s-\tilde t \,) .
\end{equation} 
Note that $\tilde f(\tilde x) + \tilde f(\tilde y)$
is equal to $f(x)+f(y)$.

If $\pi(x) \not= \pi(y)$, we have $\| x - y\| _{1} = 3$
and 
$y= x+ s_{1} + s_{2} + s_{3}$
with $s_{i} \in \ZZ\sp{n}$ and  $\| s_{i}\| _{1}=1$ for  $i=1,2,3$,
where the vectors $s_{1},s_{2},s_{3}$ are not necessarily distinct.
%% 2020-09-17 %%%%%%%%%
By (J\Mnat-EXC) for $f$, there exists an $(x,y)$-increment $s$ such that
\begin{equation} \label{prfLem37}
 f(x)+f(y) \geq  f(x+s)  + f(y-s) .
\end{equation}
Indeed, if this inequality is not true for $s=s_{1}$, then (J\Mnat-EXC) implies 
$f(x)+f(y) \geq  f(x+s_{1} +t)  + f(y-s_{1} -t)$
for some $t \in \{ s_{2}, s_{3} \}$.
Let $s = s_{3}$ if $t = s_{2}$, and
$s = s_{2}$ if $t = s_{3}$.
Then $f(x+s_{1}+t) = f(y - s)$ and  $f(y-s_{1}-t) = f(x + s)$,
and the inequality \eqref{prfLem37} holds.
For $s$ satisfying \eqref{prfLem37},
$(\tilde s, \tilde t \,) =((0,s), (t_{0},0))$ 
with $t_{0} \in \{ +1 , -1 \}$
is a desired increment pair, since
$f(x+s)= \tilde f(\tilde x +\tilde s+\tilde t \,)$
and
$f(y-s) = \tilde f(\tilde y-\tilde s-\tilde t \,)$.

If $\pi(x) = \pi(y)$, we have $\| x - y\| _{1} = 4$
and 
$y= x+ s_{1} + s_{2} + s_{3} + s_{4}$
with $s_{i} \in \ZZ\sp{n}$ and  $\| s_{i}\| _{1}=1$ for  $i=1,2,3,4$,
where the vectors $s_{1},s_{2},s_{3},s_{4}$
are not necessarily distinct.
Using a short-hand notation 
 $f_{\alpha_{1} \alpha_{2} \alpha_{3} \alpha_{4}}$
for $f(x + \sum_{i=1}\sp{4} \alpha_{i} s_{i})$,
where $\alpha_{i} \in \{ 0,1 \}$,
we want to show 
\begin{equation} \label{mnatproof3}
f_{0 0 0 0}+f_{1 1 1 1}  \geq
  \min( f_{1 1 0 0}+f_{0 0 1 1},  \  f_{1 0 1 0}+f_{0 1 0 1},  \ 
  f_{1 0 0 1}+f_{0 1 1 0}  ) .
\end{equation}
This implies that 
$(\tilde s, \tilde t \,) =((0,s_{i}), (0,s_{j}))$
is a desired increment pair for some distinct $i$, $j$.

To prove (\ref{mnatproof3}) by contradiction, assume that
\begin{equation} \label{mnatproof3neg}
f_{0 0 0 0}+f_{1 1 1 1}  < 
  \min( f_{1 1 0 0}+f_{0 0 1 1}, \   f_{1 0 1 0}+f_{0 1 0 1},  \
  f_{1 0 0 1}+f_{0 1 1 0}  ) .
\end{equation}
It is convenient here
to imagine an undirected graph $G$ with vertex set $V = \{ 1,2,3,4 \}$
and edge set $E = \{ (i,j) \mid x+s_{i}+s_{j} \in \dom f \}$.
To edge $(i,j)$ we assign weight $w_{ij}=f(x+s_{i}+s_{j})$.
Then the right-hand side of (\ref{mnatproof3neg}) is equal to
the minimum weight of a perfect matching (of size two).
If no perfect matching exists in $G$, 
the right-hand side of (\ref{mnatproof3neg}) is equal to $+\infty$ (by convention).

Suppose first that $G$ admits a perfect matching. 
By duality there exist real numbers (``potentials'') $p_{1},p_{2},p_{3},p_{4}$ 
associated with the vertices such that
\begin{equation} \label{prfwijpipj}
 f(x+s_{i}+s_{j}) \geq p_{i} + p_{j}
 \qquad ( i \not= j)
\end{equation}
and
\[
p_{1} + p_{2} +p_{3} + p_{4}
=  \min( f_{1 1 0 0}+f_{0 0 1 1}, \  f_{1 0 1 0}+f_{0 1 0 1},  \  f_{1 0 0 1}+f_{0 1 1 0}  ) 
\]
(see Remark \ref{RMblossom}).
%%% 2020-09-17 %%%
Then it follows from (\ref{mnatproof3neg}) that
\begin{equation} \label{prff0f1p1p4}
f_{0 0 0 0}+f_{1 1 1 1}  <  p_{1} + p_{2} +p_{3} + p_{4}.
\end{equation}
Even when $G$ has no perfect matching, it is easy to verify that 
we can take $p_{1},p_{2},p_{3},p_{4}$ satisfying
\eqref{prfwijpipj} and \eqref{prff0f1p1p4}.

We introduce notation 
\[
g_{\alpha_{1} \alpha_{2} \alpha_{3} \alpha_{4}} =
f_{\alpha_{1} \alpha_{2} \alpha_{3} \alpha_{4}} - \sum_{i=1}\sp{4} \alpha_{i} p_{i},
\]
where $\alpha_{i}  \in \{ 0,1 \}$ $(i=1,2,3,4)$.
For example.
\begin{align} 
&
g_{0 0 0 0} = f_{0 0 0 0},
\quad  
g_{1 1 1 1} = f_{1 1 1 1} - p_{1} - p_{2} - p_{3} - p_{4} , 
\label{prfg0000g1111}
\\ &
 g_{1 0 0 0} = f_{1 0 0 0} - p_{1},
\quad  
g_{1 1 0 0} = f_{1 1 0 0} - p_{1} - p_{2}, 
\quad  
g_{0 1 1 1} = f_{0 1 1 1} - p_{2} - p_{3} - p_{4} .
\end{align}
With this notation we can rewrite \eqref{prfwijpipj} as
\begin{equation} \label{prfgij00pos}
g_{1 1 0 0} \geq 0,
\quad  
g_{1 0 1 0} \geq 0,
\quad  
g_{1 0 0 1} \geq 0,
\quad  
g_{0 1 1 0} \geq 0,
\quad  
g_{0 1 0 1} \geq 0,
\quad  
g_{0 0 1 1} \geq 0.
\end{equation}
In addition, we assume without loss of generality that
\begin{equation} \label{prfg1000min}
  \min(   g_{1 0 0 0}, \ g_{0 1 0 0},  \   g_{0 0 1 0},  \  g_{0 0 0 1}  )
 = g_{1 0 0 0}.
\end{equation}

By the exchange property (J\Mnat-EXC) we have
\[
f_{0 0 0 0}+f_{1 1 1 0}  \geq
  \min( 
          f_{1 0 0 0}+f_{0 1 1 0}, \
          f_{0 1 0 0}+f_{1 0 1 0}, \
          f_{0 0 1 0}+f_{1 1 0 0}  
  ) .
\]
By subtracting $p_{1} +p_{2} + p_{3}$
%%% 2020-09-17: correction %%%% 
from both sides and 
 using \eqref{prfgij00pos} and \eqref{prfg1000min},
we obtain
\begin{align*}
g_{0 0 0 0}+g_{1 1 1 0} 
 &\geq
  \min(   g_{1 0 0 0}+g_{0 1 1 0}, \
          g_{0 1 0 0}+g_{1 0 1 0}, \
          g_{0 0 1 0}+g_{1 1 0 0}   )
\\
 &\geq
  \min(     g_{1 0 0 0}, \   g_{0 1 0 0}, \ g_{0 0 1 0}    )
 = g_{1 0 0 0},
\end{align*}
from which 
\[
g_{1 1 1 0}  \geq g_{1 0 0 0} - g_{0 0 0 0}.
\]
Similarly, we obtain
$g_{1 0 1 1}  \geq g_{1 0 0 0} - g_{0 0 0 0}$ and 
$g_{1 1 0 1}  \geq g_{1 0 0 0} - g_{0 0 0 0}$,
and therefore
\begin{equation} \label{prfg1000g0000}
  \min(   g_{1 0 1 1}, \  g_{1 1 0 1},  \    g_{1 1 1 0}  )
 \geq g_{1 0 0 0} - g_{0 0 0 0}.
\end{equation}

Again by (J\Mnat-EXC) we have
\[
f_{1 1 1 1}+f_{1 0 0 0}  \geq
  \min(   f_{1 1 0 0}+f_{1 0 1 1}, \
          f_{1 0 1 0}+f_{1 1 0 1}, \ 
          f_{1 0 0 1}+f_{1 1 1 0}  ).
\] 
By subtracting $2 p_{1} + p_{2} +p_{3} + p_{4}$ from both sides and 
using \eqref{prfgij00pos} and \eqref{prfg1000g0000} 
we obtain
\begin{align*}
g_{1 1 1 1}+g_{1 0 0 0}  
 &\geq
  \min(   g_{1 1 0 0}+g_{1 0 1 1}, \
          g_{1 0 1 0}+g_{1 1 0 1}, \ 
          g_{1 0 0 1}+g_{1 1 1 0}  )
\\
 &\geq
  \min(   g_{1 0 1 1}, \   g_{1 1 0 1},  \    g_{1 1 1 0}  )
\\
 & \geq g_{1 0 0 0} - g_{0 0 0 0} ,
\end{align*}
from which 
\[
 g_{0 0 0 0}  + g_{1 1 1 1}  \geq 0  .
\]
By \eqref{prfg0000g1111}
this is equivalent to
\[
 f_{0 0 0 0} + f_{1 1 1 1}  \geq p_{1} + p_{2} +p_{3} + p_{4}  ,
\]
which is a contradiction to \eqref{prff0f1p1p4}.
\end{proof}

\begin{remark}  \rm  \label{RMblossom}
%% 2020-09-17 %%%%%%%%%
According to Edmonds' perfect matching polytope theorem
\cite[Theorem 25.1]{Sch03}, 
the perfect matchings on $G=(V,E)$
can be described by a system of inequalities consisting of three kinds of inequalities:
$x_{e} \geq 0$ for $e \in E$,
$x(\delta (v)) =1$ for $v \in V$, and 
$x(\delta (U)) \geq 1$
for $U \subseteq V$ with odd $|U| \geq 3$,
where $x \in \RR\sp{E}$ and 
$\delta (U)$ denotes the set of edges connecting $U$ and $V \setminus U$.
In our present case, we have $|V|=4$, and hence we do not need the inequality 
$x(\delta (U)) \geq 1$ for $U$ with $|U| = 3$,
since $x(\delta (U)) = x(\delta (v))$ for $v \in V \setminus U$.
\finbox
\end{remark}

\section{Operations for jump \Mnat-convex functions}
\label{SCoperjmnat}
	
In this section we consider fundamental operations
such as addition, projection, aggregation, and convolution
for jump \Mnat-convex functions.
Our result (Theorem~\ref{THjmnatfn}),
connecting jump \Mnat-convexity to jump M-convexity,
enables us to translate 
the results of \cite{KMT07jump}
for jump M-convex functions
to those for jump \Mnat-convex functions.
The main objective is to show that 
jump \Mnat-convexity is preserved
under the convolution operation (Theorem~\ref{THjmnatfnopConvol})
and the transformation by a network (Theorem~\ref{THjmnatfnopNettrans}).

\subsection{Basic operations}
\label{SCbasicope}

We start with basic operations for a jump \Mnat-convex function.

\begin{proposition}  \label{PRjmnatfnop1}
Let $f: \ZZ\sp{n} \to \RR \cup \{ +\infty \}$
be a jump \Mnat-convex function.

\noindent
{\rm (1)}
For an integer vector $b$, \ 
$g(x) = f(x-b)$ is jump \Mnat-convex.

\noindent
{\rm (2)}
For any $\tau_{i} \in \{ +1, -1 \}$ 
$(1 \leq i \leq n)$, \ 
$g(x_{1},x_{2}, \ldots, x_{n}) = f(\tau_{1} x_{1}, \tau_{2} x_{2}, \ldots,  \tau_{n}x_{n})$
is jump \Mnat-convex.
In particular, \  $g(x) = f(-x)$ is jump \Mnat-convex.

\noindent
{\rm (3)}
For any permutation $\sigma$ of $(1,2,\ldots,n)$, \ 
$g(x_{1},x_{2}, \ldots, x_{n}) =  f(x_{\sigma(1)}, x_{\sigma(2)}, \ldots, x_{\sigma(n)}) $
is jump \Mnat-convex.
\end{proposition}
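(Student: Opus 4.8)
The plan is to treat the three parts uniformly. Each operation has the form $g(x) = f(\Phi(x))$ where $\Phi(x) = Ax + b$ is an affine bijection of $\ZZ\sp{n}$ with $A$ a \emph{signed permutation matrix}, i.e.\ a matrix with exactly one nonzero entry, equal to $+1$ or $-1$, in each row and column. Indeed, part (1) is the case $A = I$ with $b$ arbitrary, part (2) is the case $A = \mathrm{diag}(\tau_{1}, \ldots, \tau_{n})$ with $b = \veczero$, and part (3) is the case where $A$ is a permutation matrix with $b = \veczero$. It therefore suffices to prove the single claim: if $f$ is jump \Mnat-convex and $\Phi(x) = Ax + b$ with $A$ a signed permutation matrix and $b \in \ZZ\sp{n}$, then $g = f \circ \Phi$ is jump \Mnat-convex. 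Here $\dom g = \Phi\sp{-1}(\dom f)$, and $\Phi\sp{-1}$ is again of the same type.

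The key step is to show that $\Phi$ carries the increment structure of (J\Mnat-EXC) bijectively. First, $A$ sends each $\pm \unitvec{i}$ to $\pm \unitvec{j}$ for a single $j$, so $s \mapsto As$ is a bijection of the set of signed unit vectors onto itself. Second, I would verify coordinatewise the box identity $\Phi([x \wedge y, x \vee y]_{\ZZ}) = [\Phi x \wedge \Phi y, \Phi x \vee \Phi y]_{\ZZ}$: a coordinate permutation merely reindexes the intervals, a translation shifts both endpoints of each interval equally, and a sign flip in coordinate $i$ negates that interval, interchanging the roles of $\wedge$ and $\vee$ there while leaving the interval itself intact. Combining these two facts, $s$ is an $(x,y)$-increment if and only if $As$ is a $(\Phi x, \Phi y)$-increment, and likewise $(s,t)$ is an $(x,y)$-increment pair if and only if $(As, At)$ is a $(\Phi x, \Phi y)$-increment pair.

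With this correspondence the verification of (J\Mnat-EXC) for $g$ is immediate. Given $x, y \in \dom g$ and an $(x,y)$-increment $s$, set $X = \Phi x$, $Y = \Phi y \in \dom f$ and $s' = As$, a $(X,Y)$-increment. Applying (J\Mnat-EXC) to $f$ yields either (i) $X + s', Y - s' \in \dom f$ with $f(X) + f(Y) \geq f(X+s') + f(Y-s')$, or (ii) a $(X+s', Y)$-increment $t'$ with $X+s'+t', Y-s'-t' \in \dom f$ and the analogue of \eqref{jumpmexc2}. Since $\Phi$ is affine, $\Phi(x+s) = X + s'$ and $\Phi(y-s) = Y - s'$, and setting $t = A\sp{-1} t'$ gives $\Phi(x+s+t) = X+s'+t'$ and $\Phi(y-s-t) = Y-s'-t'$; moreover $t$ is an $(x+s,y)$-increment by the box identity applied to the pair $(x+s, y)$. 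Because $g = f \circ \Phi$, every membership and every inequality transfers verbatim, producing case (i) or case (ii) of (J\Mnat-EXC) for $g$.

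I expect the argument to require no new inequality to be established, since each inequality needed is exactly one supplied by (J\Mnat-EXC) for $f$; the sole point deserving care is the box identity in the sign-flip case, where negating a coordinate interchanges $\wedge$ and $\vee$, and this I anticipate to be the only, and minor, obstacle.
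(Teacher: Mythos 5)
Your proof is correct, and it fills in precisely the verification that the paper dismisses with ``we can verify easily that $g$ satisfies (J\Mnat-EXC)'': a direct check of the exchange axiom for the transformed function. Your unification of all three parts into a single claim about signed permutation matrices composed with translations, hinging on the box identity $\Phi([x \wedge y, x \vee y]_{\ZZ}) = [\Phi x \wedge \Phi y, \Phi x \vee \Phi y]_{\ZZ}$ and hence on the bijective transfer of $(x,y)$-increments, is a clean and accurate way to organize that verification.
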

\begin{proof}
(1)--(3)
We can verify easily that $g$ satisfies the exchange property (J\Mnat-EXC).
\end{proof}

A function
$\varphi: \ZZ^{n} \to \RR \cup \{ +\infty \}$
in $x=(x_{1}, x_{2}, \ldots,x_{n}) \in \ZZ^{n}$
is called  {\em separable convex}
if it can be represented as
\begin{equation}  \label{sepfndef}
\varphi(x) = \varphi_{1}(x_{1}) + \varphi_{2}(x_{2}) + \cdots + \varphi_{n}(x_{n})
\end{equation}
with univariate functions
$\varphi_{i}: \ZZ \to \RR \cup \{ +\infty \}$ satisfying 
\begin{equation}  \label{univarconvdef}
\varphi_{i}(t-1) + \varphi_{i}(t+1) \geq 2 \varphi_{i}(t)
\qquad (t \in \ZZ).
\end{equation}

\begin{proposition} \label{PRjmnatfnopAdd}
Let $f: \ZZ\sp{n} \to \RR \cup \{ +\infty \}$ be a jump \Mnat-convex function.

\noindent
{\rm (1)}
For any $a \geq 0$, \ 
$g(x) = a f(x)$ 
is jump \Mnat-convex.

\noindent
{\rm (2)}
For any $c \in \RR\sp{n}$, \ 
$g(x) = f(x) + \sum_{i=1}\sp{n} c_{i} x_{i}$
is jump \Mnat-convex.

\noindent
{\rm (3)}
For any separable convex function $\varphi$,  \ 
$g(x) = f(x) + \varphi(x)$
is jump \Mnat-convex.
\end{proposition}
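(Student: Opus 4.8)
The plan is to dispatch parts (1) and (2) by a direct appeal to {\rm (J\Mnat-EXC)} for $f$ and to concentrate the work on part (3). For (1) we have $\dom g=\dom f$, and for any $x,y\in\dom g$ and any $(x,y)$-increment $s$ I would keep exactly the alternative (i) or (ii) that {\rm (J\Mnat-EXC)} furnishes for $f$ and multiply the relevant inequality, \eqref{jumpmexc1} or \eqref{jumpmexc2}, by $a\ge 0$; nonnegativity preserves the inequality while the membership conditions are untouched, so the same alternative holds for $g$. For (2), again $\dom g=\dom f$, and the point is that the linear part is conserved by every exchange move: since $(x+s)+(y-s)=x+y$ and $(x+s+t)+(y-s-t)=x+y$, the term $\sum_i c_i x_i$ contributes equally to the two sides, so the exchange inequality for $g$ is literally that for $f$.

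The substance is (3). First I would record the effective domain $\dom g=\dom f\cap\dom\varphi$; since each $\varphi_i$ is convex by \eqref{univarconvdef}, its effective domain is an integer interval, so $\dom\varphi=\prod_i\dom\varphi_i$ is an integer box. The key geometric fact is that every move produced by {\rm (J\Mnat-EXC)} for $f$ stays inside the box spanned by the two points: from the nesting $[(x+s)\wedge y,(x+s)\vee y]_{\ZZ}\subseteq[x\wedge y,x\vee y]_{\ZZ}$ one gets $x+s+t\in[x\wedge y,x\vee y]_{\ZZ}$, and then $y-s-t=x+y-(x+s+t)$ lies in the same box by reflection through the midpoint; likewise $x+s$ and $y-s$ in case (i). Consequently, if $x,y\in\dom g$ then the target points, already in $\dom f$ by the axiom for $f$, also lie in $\dom\varphi$, because a box containing $x$ and $y$ contains the whole box they span; hence they lie in $\dom g$. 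This settles all domain bookkeeping.

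It then remains to add the separable term to the exchange inequality. Writing $x'=x+s+t,\ y'=y-s-t$ in case (ii) (or $x'=x+s,\ y'=y-s$ in case (i)), we have $x'+y'=x+y$ and $x',y'\in[x\wedge y,x\vee y]_{\ZZ}$, so coordinatewise the pair $\{x'_k,y'_k\}$ has the same sum as $\{x_k,y_k\}$ and is nested inside $[\,x_k\wedge y_k,\,x_k\vee y_k\,]$. Since \eqref{univarconvdef} makes the forward differences of $\varphi_k$ nondecreasing, this gives $\varphi_k(x'_k)+\varphi_k(y'_k)\le\varphi_k(x_k)+\varphi_k(y_k)$, and summing over $k$ yields $\varphi(x')+\varphi(y')\le\varphi(x)+\varphi(y)$. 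Adding this to the inequality $f(x)+f(y)\ge f(x')+f(y')$ supplied by {\rm (J\Mnat-EXC)} for $f$ produces $g(x)+g(y)\ge g(x')+g(y')$, which is precisely the corresponding alternative of {\rm (J\Mnat-EXC)} for $g$.

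The step I expect to be the main obstacle is the coordinatewise convexity estimate in the degenerate cases, above all $s=t=\pm\unitvec{i}$, where the two increments collapse into a single double step and $x'_i,y'_i$ may equal $y_i,x_i$ (a pure swap), so the inequality holds only with equality and one must check it never reverses. The nesting-with-fixed-sum argument above is the clean way to cover this uniformly, swaps included. As an alternative to all of (3) one may instead invoke Theorem~\ref{THjmnatfn}: the associated function satisfies $\widetilde{f+\varphi}=\tilde f+\hat\varphi$, where $\hat\varphi(x_0,x)=\varphi(x)$ is separable convex in $n+1$ variables (its $x_0$-component being the convex function $0$); as $\tilde f$ is jump M-convex, so is $\tilde f+\hat\varphi$ by the companion statement for jump M-convex functions, and a second application of Theorem~\ref{THjmnatfn} returns the jump \Mnat-convexity of $f+\varphi$. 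I would present the direct argument as primary, since it is self-contained and does not rely on that auxiliary M-convex result being on record.
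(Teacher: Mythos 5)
Your proposal is correct and follows essentially the same route as the paper: parts (1) and (2) by direct inspection of {\rm (J\Mnat-EXC)}, and part (3) by retaining whichever alternative (i) or (ii) the axiom for $f$ supplies and then adding the inequality $\varphi(x')+\varphi(y')\le\varphi(x)+\varphi(y)$ for the nested, sum-preserving pair $(x',y')$, which is exactly the convexity step the paper uses. The only cosmetic difference is that the paper reduces to adding one $\varphi_i$ at a time while you treat all coordinates at once and make the domain bookkeeping explicit.
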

\begin{proof}
For (1) and (2)
we can verify easily that $g$ satisfies the exchange property (J\Mnat-EXC).
%% 2020-09-17 revision; cf.Proof of Theorem 5 in \cite{KMT07jump} %%%%
For (3) it suffices to prove that 
the function $g(x) = f(x) + \varphi_{i}(x_{i})$ 
with a particular $i\in N$
satisfies (J\Mnat-EXC).
Suppose that $x\in \ZZ\sp{n}, \ y \in \ZZ\sp{n}$ and $s$ is an $(x, y)$-increment. 
By (J\Mnat-EXC) for $f$, 
we have (i)
$f(x) + f(y) \geq f(x+s) + f(y-s)$ 
or (ii)
$f(x) + f(y) \geq f(x+s+t) + f(y-s-t)$
 for some $(x+s, y)$-increment $t$.
In the former case (i)
we have
$\varphi_{i}(x_{i}) + \varphi_{i}(y_{i}) 
\geq  \varphi_{i}(x_{i}+s_{i}) + \varphi_{i}(y_{i}-s_{i})$
by convexity of $\varphi_{i}$, and hence
$g(x) + g(y) \geq g(x+s) + g(y-s)$.
In the latter case (ii)
we have
$\varphi_{i}(x_{i}) + \varphi_{i}(y_{i}) 
\geq  \varphi_{i}(x_{i}+s_{i}+t_{i}) + \varphi_{i}(y_{i}-s_{i}-t_{i})$
by convexity of $\varphi_{i}$, and hence
$g(x) + g(y) \geq g(x+s+t) + g(y-s-t)$.
Thus $g$ satisfies (J\Mnat-EXC).
\end{proof}

It is noted that the scaling operation of the variables 
does not preserve jump \Mnat-convexity.
That is, for a positive integer $\alpha$, the function 
$g(x) = f(\alpha x)$ in $x \in \ZZ\sp{n}$
is not necessarily jump \Mnat-convex. 
In connection to (3) above, it is noteworthy that
the sum $f_{1} + f_{2}$ of two jump \Mnat-convex functions $f_{1}$ and $f_{2}$
is not necessarily jump \Mnat-convex.

Let $U$ be a subset of $N = \{ 1,2, \ldots, n \}$, and 
$f: \ZZ\sp{N} \to \RR \cup \{ +\infty \}$.
The {\em projection} of $f$ to $U$ is a function
$f\sp{U}: \ZZ\sp{U} \to \RR \cup \{ +\infty, -\infty \}$
defined by
\begin{equation} \label{projfndef} 
  f\sp{U}(y)  =  \inf \{ f(y,z) \mid z \in \ZZ\sp{N \setminus U} \}
  \qquad (y \in \ZZ\sp{U}) ,
\end{equation}
where the notation $(y,z)$ means the vector
whose $i$th component is equal to $y_{i}$ for $i \in U$
and to $z_{i}$ for $i \in N \setminus U$;
for example, if $N = \{ 1,2,3,4 \}$ and $U = \{ 2,3 \}$,
we have $(y,z) = (z_{1}, y_{2}, y_{3}, z_{4})$.
The projection is sometimes called {\em partial minimization}.
The
{\em restriction} of $f$ to $U$ is a function 
$f_{U}: \ZZ\sp{U} \to \RR \cup \{ +\infty \}$ defined by
\begin{equation} \label{restrfndef}
 f_{U}(y) = f(y,\veczero_{N \setminus U})
  \qquad (y \in \ZZ\sp{U}) ,
\end{equation}
where $\veczero_{N \setminus U}$ denotes the zero vector 
in $\ZZ\sp{N \setminus U}$.

\begin{proposition} \label{PRjmnatfnopRestProj}
Let 
$f: \ZZ\sp{N} \to \RR \cup \{ +\infty \}$ be a jump \Mnat-convex function, and $U \subseteq N$.

\noindent
{\rm (1)}
The restriction $f_{U}$ is jump \Mnat-convex, provided that $\dom f_{U} \neq \emptyset$.

\noindent
{\rm (2)}
The projection $f\sp{U}$ is jump \Mnat-convex, provided that $f\sp{U} > -\infty$.
\end{proposition}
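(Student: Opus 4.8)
The plan is to reduce both statements to the one-step exchange axiom (J\Mnat-EXC) and verify it directly; each part also admits a translation through the embedding of Theorem~\ref{THjmnatfn} into the corresponding closure results of \cite{KMT07jump} for jump M-convex functions. The restriction is the routine case, and I would treat it first.

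For (1), let $y, y' \in \dom f_{U}$ and write $w = (y, \veczero_{N \setminus U})$ and $w' = (y', \veczero_{N \setminus U})$, both in $\dom f$. Given a $(y,y')$-increment $s \in \ZZ\sp{U}$, the vector $\bar s$ obtained by padding $s$ with zeros on $N \setminus U$ is a $(w,w')$-increment, since $w$ and $w'$ agree (both $\veczero$) off $U$. Applying (J\Mnat-EXC) for $f$ to $w, w', \bar s$ returns either case (i) at $w \pm \bar s$ or case (ii) with some increment $\bar t = \pm \unitvec{k}$. The key observation is that $k$ must lie in $U$: off $U$ the points $w + \bar s$ and $w'$ both take value $0$, so the relevant interval is the single point $\{ 0 \}$ and no increment in a coordinate of $N \setminus U$ is admissible. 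Hence $\bar s$ and $\bar t$ are supported on $U$, the points produced by the exchange again vanish off $U$, and the inequality for $f$ descends verbatim to the required instance of (J\Mnat-EXC) for $f_{U}$. Equivalently one checks the identity $\widetilde{f_{U}} = (\tilde f)_{\{0\} \cup U}$ (fixing the eliminated coordinates to $\veczero$ leaves all parities unchanged) and invokes closure of jump M-convexity under restriction \cite{KMT07jump} together with Theorem~\ref{THjmnatfn}.

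For (2), fix $y, y' \in \dom f^{U}$ and choose $z, z' \in \ZZ\sp{N \setminus U}$ attaining (or, if the infima are not attained, $\varepsilon$-attaining) $f^{U}(y) = f(y,z)$ and $f^{U}(y') = f(y',z')$. Put $w = (y,z)$, $w' = (y',z')$, and for a $(y,y')$-increment $s$ let $\bar s = (s, \veczero)$, which is a $(w,w')$-increment. Apply (J\Mnat-EXC) for $f$ to $w, w', \bar s$. If case (i) occurs, or if case (ii) occurs with an increment $\bar t = \pm \unitvec{k}$ having $k \in N \setminus U$, then the two resulting points still have $U$-parts $y+s$ and $y'-s$, so bounding $f$ from below by $f^{U}$ at these projections yields case (i) of (J\Mnat-EXC) for $f^{U}$. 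If case (ii) occurs with $k \in U$, then $\bar t$ restricts to a genuine $(y+s,y')$-increment $t$ and we obtain case (ii) for $f^{U}$. Thus (J\Mnat-EXC) holds for $f^{U}$.

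The main obstacle is the non-attainment of the infimum in (2): when $f^{U}(y)$ is not achieved one must run the argument with $\varepsilon$-minimizers and pass to the limit. Since the admissible increments $s$ and $\bar t$ range over the finite set $\{ \pm \unitvec{k} \}$, a pigeonhole argument selects a single case together with a single pair $(s,t)$ that is valid along a sequence $\varepsilon \to 0$, and the resulting inequality survives in the limit; this is the one point demanding care. In the embedding route the analogous delicate point is that the lift $\widetilde{f^{U}}$ is \emph{not} literally a projection of $\tilde f$ onto $\{0\} \cup U$, because that extra coordinate records the parity of all of $N$ rather than of $U$ alone; one must also eliminate the parity coordinate, which is precisely the reason a projection of a jump M-convex function lands in the strictly larger class of jump \Mnat-convex functions.
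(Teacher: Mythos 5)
Your proof is correct, but for part (2) it takes a genuinely different route from the paper. For (1) the paper simply asserts that (J\Mnat-EXC) is ``easily verified'' for the restriction, and your direct check (in particular the observation that the returned increment $\bar t$ must be supported on $U$ because both points vanish off $U$, so the exchange never leaves the restricted coordinate subspace) is exactly the verification being left to the reader; your alternative identity $\widetilde{f_{U}} = (\tilde f)_{\{0\}\cup U}$ is also a valid shortcut. For (2), however, the paper does \emph{not} argue directly: it observes (Remark~\ref{RMprojFromConvol}) that the projection is the convolution of $f$ with the indicator of the cylinder $\{(y,z) \mid y = \veczero_{U}\}$, and then invokes Theorem~\ref{THjmnatfnopConvol}, which in turn rests on the aggregation theorem and hence on the embedding $f \mapsto \tilde f$ and Lemma~10 of the cited reference. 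Your direct verification with $\varepsilon$-minimizers, the case split on whether the increment $\bar t$ lands in $U$ or in $N\setminus U$ (the latter collapsing to case (i) of (J\Mnat-EXC) for $f^{U}$ after bounding $f$ below by $f^{U}$), and the pigeonhole selection of a single outcome along $\varepsilon \to 0$ is sound and entirely self-contained; it buys an elementary proof that does not depend on the convolution machinery, at the cost of the slightly delicate limit argument that the paper's route delegates to the earlier lemmas. Your closing remark is also on point: $\widetilde{f^{U}}$ is not a coordinate projection of $\tilde f$ because the auxiliary coordinate records the parity over all of $N$, which is precisely why the paper cannot reduce (2) to a projection result for jump M-convex functions and instead routes it through convolution.
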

\begin{proof}
(1) 
We can verify easily that the restriction $f_{U}$ satisfies
the exchange property (J\Mnat-EXC).
(2) 
This follows from Theorem~\ref{THjmnatfnopConvol} 
since the projection can be regarded as a special case of convolution.
See Remark \ref{RMprojFromConvol} in Section~\ref{SCconvol}.
\end{proof}

For a set $S \subseteq \ZZ\sp{N}$
and a subset $U$ of $N$,
the {\em restriction} 
of $S$ to $U$ is a subset of $\ZZ\sp{U}$ defined by
\begin{equation} \label{restrsetdef}
 S_{U} =  \{ y \in \ZZ\sp{U} \mid  (y,\veczero_{N \setminus U}) \in S \}, 
\end{equation}
and the {\em projection}
of $S$ to $U$ is a subset of $\ZZ\sp{U}$ defined by
\begin{equation}  \label{projsetdef}
 S\sp{U} =  \{ y \in \ZZ\sp{U} \mid  (y,z) \in S \mbox{ for some $z \in \ZZ\sp{N \setminus U}$} \}. 
\end{equation}
Since a set satisfies 
the exchange property (J$\sp{\natural}$-EXC)
precisely when its indicator function  satisfies (J\Mnat-EXC),
it follows from Proposition~\ref{PRjmnatfnopRestProj}
that 
the restriction of an s.e.~jump system, if not empty,
is an s.e.~jump system,
and that 
the projection of an s.e.~jump system
is an s.e.~jump system.

\subsection{Aggregation}
\label{SCaggr}

Let $\mathcal{P}$ be a  partition of $N = \{ 1,2, \ldots, n \}$ into disjoint (nonempty) subsets:
$N = N_{1} \cup N_{2} \cup \dots \cup N_{m}$.
For a function $f: \ZZ \sp{N} \to \RR \cup \{+\infty \}$, 
the {\em aggregation} of $f$ with respect to $\mathcal{P}$
is defined as a function
$g : \ZZ \sp{m} \to \RR \cup \{+\infty, -\infty\}$ given by 
\begin{equation} \label{aggrfndef}
g(y_{1}, y_{2}, \dots , y_{m}) 
= \inf \{ f(x)   \mid x(N_{j}) = y_{j} \ (j=1,2,\ldots,m) \}. 
\end{equation}
If $m = n-1$
(in which case we have $|N_{k}|=2$ for some $k$ and $|N_{j}| =1$ for other $j\not= k$),
 this is called an {\em elementary aggregation}.
Any (general) aggregation can be obtained by repeated applications of elementary aggregations.

For jump M-convex functions the following fact serves as
the technical pivot in discussing
aggregation, convolution, and transformation by a network
(see \cite{KMT07jump} for details).

\begin{lemma}[{\cite[Lemma 10]{KMT07jump}}]\label{LMeleaggrM}
The elementary aggregation of a jump M-convex function is jump M-convex, 
provided it does not take the value $-\infty$.
\finbox
\end{lemma}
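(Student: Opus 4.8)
The plan is to verify the exchange axiom (JM-EXC) for the elementary aggregation $g$ directly, by lifting increments from the aggregated coordinates to the original ones, invoking (JM-EXC) for $f$ there, and pushing the resulting exchange back down. Without loss of generality I would aggregate the last two coordinates, writing $\bar x = (x_{1}, \ldots, x_{n-2},\, x_{n-1}+x_{n}) \in \ZZ\sp{n-1}$ for the image of $x \in \ZZ\sp{n}$, so that $g(\bar x) = \inf \{ f(z) \mid \bar z = \bar x \}$. Since aggregation preserves the component sum, $\dom g$ is automatically a constant-parity system, and establishing (JM-EXC) for $g$ yields jump M-convexity by definition (membership of the new points in $\dom g$ will fall out of the value inequality). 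A preliminary point is that, under the hypothesis $g > -\infty$, the defining infima are attained: the restriction of $f$ to a fiber $\{ z \mid \bar z = y \}$ is a one-dimensional discrete convex function, and a convex function on $\ZZ$ bounded below attains its minimum; so for each $y \in \dom g$ one may fix a minimizer $x$ with $\bar x = y$ and $f(x)=g(y)$.

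The core step is as follows. Given $y, y' \in \dom g$ and a $(y,y')$-increment $s$, I lift $s$ to an $(x,x')$-increment $\hat s$ with $\overline{\hat s}=s$: for a non-aggregated coordinate simply $\hat s = s$; for the aggregated coordinate, say $y_{n-1}=x_{n-1}+x_{n} < x'_{n-1}+x'_{n}=y'_{n-1}$, I take $\hat s = \unitvec{j}$ for an original index $j \in \{n-1,n\}$ with $x_{j} < x'_{j}$ (such $j$ exists because the sums differ). Applying (JM-EXC) for $f$ produces an $(x+\hat s, x')$-increment $\hat t$ with $f(x)+f(x') \geq f(x+\hat s+\hat t) + f(x'-\hat s-\hat t)$. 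Setting $\bar t := \overline{\hat t}$ and using $\overline{x+\hat s+\hat t}=y+s+\bar t$ together with the infimum definition of $g$, one gets $g(y+s+\bar t) \leq f(x+\hat s+\hat t)$ and likewise $g(y'-s-\bar t) \leq f(x'-\hat s-\hat t)$; chaining these gives $g(y)+g(y') \geq g(y+s+\bar t) + g(y'-s-\bar t)$, the required value inequality, with both right-hand points in $\dom g$.

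The main obstacle is purely combinatorial: confirming that $(s,\bar t)$ is a legitimate $(y,y')$-increment pair. Most projections are routine, and one checks directly that they remain in the box $[y \wedge y', y \vee y']_{\ZZ}$. The genuinely delicate case is a degenerate ``collapse'', where $\hat s$ and $\hat t$ act oppositely on the two aggregated coordinates ($\hat s=\unitvec{n-1}$, $\hat t=-\unitvec{n}$, or symmetrically), so that $\bar t = -\overline{\hat s}$, the net move $s+\bar t$ is zero, and the exchange above is vacuous. To rule this out I would fix the minimizers $x, x'$ so as to minimize $\| x-x'\|_{1}$ among all pairs of fiber minimizers over $y$ and $y'$. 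In the collapse case the displayed inequality forces equality throughout (since $x+\unitvec{n-1}-\unitvec{n}$ and $x'-\unitvec{n-1}+\unitvec{n}$ lie again in the fibers over $y$ and $y'$), so $x+\unitvec{n-1}-\unitvec{n}$ is itself a minimizer over $y$; but in this case $x_{n-1}<x'_{n-1}$ and $x_{n}>x'_{n}$, whence $\| (x+\unitvec{n-1}-\unitvec{n}) - x'\|_{1} = \| x-x'\|_{1}-2$, contradicting minimality.

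Thus under the minimal-distance choice the collapse cannot arise, and the projected increment $\bar t$ always delivers a genuine $(y,y')$-increment pair, completing the verification of (JM-EXC) for $g$. I expect the minimal-distance selection, together with the box-membership checks for the non-degenerate projections, to be the only subtle points of the argument; as an alternative route one could invoke the local characterization (Lemma~\ref{LMjmlocex}) to restrict attention to pairs with $\| y-y'\|_{1}=4$, which renders the remaining case analysis finite.
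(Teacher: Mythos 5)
The paper does not actually prove Lemma~\ref{LMeleaggrM}; it is imported verbatim from \cite[Lemma~10]{KMT07jump}, so your proposal is competing with the (nontrivial) proof in that reference rather than with anything in this paper. Your sketch contains two genuine gaps. The first is the attainment of the fiber infima, on which your whole argument --- in particular the minimal-distance selection of minimizers and the equality-forcing step that excludes the collapse --- rests. The justification offered fails on both counts: a convex function on $\ZZ$ bounded below need not attain its infimum (e.g.\ $t \mapsto 2^{-t}$; concretely, for $n=2$, $\dom f = \{(t,-t) : t \in \ZZ\}$ and $f(t,-t)=2^{-t}$ is jump M-convex, yet $g(0)=0$ is not attained), and fiber convexity itself does not follow from {\rm (JM-EXC)}, since for two points $x$ and $x+2(\unitvec{i}-\unitvec{j})$ on a fiber the axiom can be satisfied by the off-fiber pair $x+2\unitvec{i}$, $y-2\unitvec{i}$ rather than by the midpoint pair. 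Passing to $\epsilon$-minimizers rescues the bare value inequality (there are finitely many candidate $\bar t$), but destroys the exact-equality argument you use to rule out the collapse.

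The second gap is the claim that all non-collapse projections are ``routine'' box-membership checks. If $\hat t = \pm\unitvec{k}$ with $k \in \{n-1,n\}$ while $\hat s$ lives on a non-aggregated coordinate (or acts on the other aggregated coordinate with the same sign), then validity of $\hat t$ as an $(x+\hat s, x')$-increment only compares $x_k$ with $x'_k$, not the sums $x_{n-1}+x_n$ and $x'_{n-1}+x'_n$. For instance $x_{n-1}<x'_{n-1}$ is perfectly compatible with $y_{n-1}\geq y'_{n-1}$, in which case $\bar t$ pushes the aggregated coordinate outside the box $[\,(y+s)\wedge y', (y+s)\vee y'\,]_{\ZZ}$ and $(s,\bar t)$ is not a legitimate $(y,y')$-increment pair; since {\rm (JM-EXC)} for $f$ only guarantees the existence of \emph{some} $\hat t$, you cannot steer away from this outcome. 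The minimal-distance choice does not exclude it either, because the fiber minimizers can be forced apart in opposite directions on the two aggregated coordinates (e.g.\ unique minimizers $(\ldots,0,c)$ and $(\ldots,c,0)$ over fibers with equal aggregated value). Handling this ``overshoot'' case, together with non-attainment, is precisely where the real work in \cite[Lemma~10]{KMT07jump} lies.
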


We can extend this lemma to jump \Mnat-convex functions
by using the relation between jump \Mnat- and M-convexity established in Theorem~\ref{THjmnatfn}.

\begin{lemma} \label{LMeleaggrMnat}
The elementary aggregation of a jump \Mnat-convex function is jump \Mnat-convex, 
provided it does not take the value $-\infty$.
\end{lemma}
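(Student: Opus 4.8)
The plan is to reduce the claim to the corresponding fact for jump M-convex functions, Lemma~\ref{LMeleaggrM}, through the embedding $f \mapsto \tilde f$ supplied by Theorem~\ref{THjmnatfn}. Write $N = N_{1} \cup \cdots \cup N_{m}$ for the partition defining the elementary aggregation, with $N_{k} = \{i,j\}$ the unique two-element block and all other blocks singletons, and let $g$ denote the aggregation of $f$. By hypothesis $g > -\infty$.

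First I would pass to $\tilde f : \ZZ\sp{n+1} \to \RR \cup \{ +\infty \}$. Since $f$ is jump \Mnat-convex, Theorem~\ref{THjmnatfn} guarantees that $\tilde f$ is jump M-convex. I then apply to $\tilde f$ the elementary aggregation that merges exactly the same pair $\{i,j\}$, leaving the auxiliary coordinate $x_{0}$ (and every other coordinate) untouched; regarding $\{0\}$ as one additional singleton block, this is a genuine elementary aggregation of the $(n+1)$-variable function $\tilde f$, producing a function $\hat g$ of $(x_{0}, y)$ with $y \in \ZZ\sp{m}$. By Lemma~\ref{LMeleaggrM}, $\hat g$ is jump M-convex as soon as it avoids the value $-\infty$.

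The key step, and the only one carrying real content, is the identity $\hat g = \widetilde g$: aggregating $\tilde f$ produces precisely the tilde-lift of $g$. Here I would exploit that elementary aggregation preserves the total component sum. Writing $M = \{1,2,\ldots,m\}$, every $x$ feasible in the infimum defining $\hat g(x_{0}, y)$ satisfies $x(N) = x_{i} + x_{j} + \sum_{l} x_{l} = y(M)$, and hence $\pi(x) = \pi(y)$. Consequently the constraint $x_{0} = \pi(x)$ implicit in the definition of $\tilde f$ becomes the fixed constraint $x_{0} = \pi(y)$, independent of the minimizing $x$. It follows that $\hat g(x_{0}, y) = g(y)$ when $x_{0} = \pi(y)$ and $\hat g(x_{0}, y) = +\infty$ otherwise, which is exactly the definition of $\widetilde g$ via \eqref{ftilde}. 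In particular $\hat g$ takes only the finite values of $g$ together with $+\infty$, so the assumption $g > -\infty$ ensures $\hat g > -\infty$ and Lemma~\ref{LMeleaggrM} indeed applies.

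Having shown that $\widetilde g = \hat g$ is jump M-convex, I would finally invoke the ``if'' direction of Theorem~\ref{THjmnatfn} to conclude that $g$ itself is jump \Mnat-convex. I expect the main obstacle to be the bookkeeping in the identity $\hat g = \widetilde g$: one must verify that the auxiliary parity coordinate is carried along correctly under aggregation and that it continues to record the parity of the aggregated vector, which is precisely where the sum-preserving nature of aggregation is essential.
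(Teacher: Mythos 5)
Your proposal is correct and follows essentially the same route as the paper: lift $f$ to $\tilde f$, aggregate, verify that the result equals $\tilde g$ using the fact that elementary aggregation preserves the component sum (hence the parity coordinate), and then descend via Theorem~\ref{THjmnatfn}. The identity $\hat g = \tilde g$ is exactly the paper's key computation, and your handling of the $-\infty$ proviso is if anything slightly more explicit than the paper's.
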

\begin{proof}
Let 
$f: \ZZ\sp{n} \to \RR \cup \{ +\infty \}$
be a jump \Mnat-convex function, and
$g: \ZZ\sp{n-1} \to \RR \cup \{ +\infty \}$
be its elementary aggregation given by
\begin{equation*} 
g(y_{1}, \dots , y_{n-2}, y_{n-1}) 
= \inf \{ f(y_{1}, \dots , y_{n-2}, x_{n-1}, x_{n})   \mid x_{n-1} + x_{n} = y_{n-1} \}. 
\end{equation*}
As in \eqref{ftilde}
we define 
$\tilde f: \ZZ\sp{n+1} \to \RR \cup \{ +\infty \}$
and 
$\tilde g: \ZZ\sp{n} \to \RR \cup \{ +\infty \}$
 by
\begin{align}  
\tilde f(x_{0}, x) &= 
   \left\{  \begin{array}{ll}
   f(x)            &   (x_{0} = \pi(x)) ,      \\
   + \infty      &   (\mbox{\rm otherwise}) , \\
                      \end{array}  \right.
 \label{ftildeaggr}
\\
\tilde g(y_{0}, y) & = 
   \left\{  \begin{array}{ll}
   g(y)            &   (y_{0} = \pi(y)) ,      \\
   + \infty      &   (\mbox{\rm otherwise}) , \\
                      \end{array}  \right.
 \label{gtildeaggr}
\end{align}
where
$x = (x_{1}, x_{2}, \ldots ,  x_{n})$
and $y = (y_{1}, y_{2}, \dots , y_{n-1})$.
Let $h$ denote the elementary aggregation of $\tilde f$, that is,
\begin{equation*} 
h(y_{0}, y_{1}, \dots , y_{n-2}, y_{n-1}) 
= \inf \{ \tilde f(y_{0}, y_{1}, \dots , y_{n-2}, x_{n-1}, x_{n})   \mid x_{n-1} + x_{n} = y_{n-1} \}. 
\end{equation*}
It turns out that $h$ coincides with $\tilde g$, which we prove later.
Since $f$ is jump \Mnat-convex,
$\tilde f$ is jump M-convex by Theorem~\ref{THjmnatfn} (``only if'' part).
Then, by Lemma \ref{LMeleaggrM},
$h$ is jump M-convex.
By $h =\tilde g$ (shown below),  $\tilde g$ is jump M-convex.
Finally,  Theorem~\ref{THjmnatfn}  (``if'' part) shows that $g$ is jump \Mnat-convex.

It remains to prove $h = \tilde g$.  We use short-hand notations:
\[
y'=(y_{1}, \dots , y_{n-2}),  
\quad
\eta = y_{n-1},
\quad
\xi_{1} = x_{n-1}, 
\quad
\xi_{2} = x_{n}.
\]
Then we have
$y=(y', \eta)$ and
\begin{align} 
g(y', \eta) 
&= \inf \{ f(y', \xi_{1}, \xi_{2})   \mid \xi_{1} + \xi_{2} = \eta \} ,
\label{gaggr}
\\
h(y_{0}, y',\eta) 
&= \inf \{ \tilde f(y_{0}, y', \xi_{1}, \xi_{2})   \mid \xi_{1} + \xi_{2} = \eta \}. 
\label{haggr}
\end{align}
Obviously, we have
$ \pi(y',\eta) = \pi(y', \xi_{1}, \xi_{2})$
if $\xi_{1} + \xi_{2} = \eta$.
If $y_{0} =  \pi(y',\eta)$, 
we have
$ y_{0} = \pi(y', \xi_{1}, \xi_{2})$ and hence
\begin{align*} 
h(y_{0}, y',\eta) 
&= \inf \{ f(y', \xi_{1}, \xi_{2})   \mid \xi_{1} + \xi_{2} = \eta \}
= g(y', \eta) 
= \tilde g(y_{0}, y', \eta) 
\end{align*}
by  
\eqref{ftildeaggr}, 
\eqref{gtildeaggr}, 
\eqref{gaggr}, and \eqref{haggr}.
If $y_{0} \not=  \pi(y',\eta)$, we have 
$\tilde g(y_{0}, y', \eta) = +\infty$ 
by the definition of $\tilde g$ in \eqref{gtildeaggr},
whereas 
$h(y_{0}, y',\eta) = +\infty$ 
since 
$\tilde f(y_{0}, y', \xi_{1}, \xi_{2}) = +\infty$
for all $(\xi_{1}, \xi_{2})$ in \eqref{haggr}.
\end{proof}

It follows from Lemma \ref{LMeleaggrMnat}
that the aggregation of a jump \Mnat-convex function is jump \Mnat-convex.
This extends \cite[Theorem 11]{KMT07jump} for jump M-convex functions.

\begin{theorem} \label{THjmnatfnopAggr}
The aggregation of a jump \Mnat-convex function is jump \Mnat-convex, 
provided it does not take the value $-\infty$.
\finbox
\end{theorem}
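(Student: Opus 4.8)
The plan is to realize the general aggregation as a finite sequence of elementary aggregations and to apply Lemma~\ref{LMeleaggrMnat} repeatedly, so that the only genuine work is the bookkeeping of the value $-\infty$. As already noted after the definition \eqref{aggrfndef}, any aggregation with respect to a partition $\mathcal{P}$ into $m$ blocks can be obtained by $n-m$ successive elementary aggregations: within each block $N_{j}$ we merge its coordinates two at a time, each merge being an elementary aggregation that decreases the number of variables by one. Since the infimum in \eqref{aggrfndef} factors as an iterated infimum over the fibers of the (fixed) block-summation map, the order of these merges is immaterial and their composition equals $g$. I would therefore fix one such ordering and write $f = g_{0}, g_{1}, \ldots, g_{n-m} = g$, where each $g_{\ell+1}$ is an elementary aggregation of $g_{\ell}$.

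The argument then proceeds by induction on $\ell$, with induction hypothesis that $g_{\ell}$ is jump \Mnat-convex. The base case $g_{0}=f$ holds by assumption. For the inductive step I would like to invoke Lemma~\ref{LMeleaggrMnat} on $g_{\ell}$; this lemma gives that its elementary aggregation $g_{\ell+1}$ is jump \Mnat-convex, but only under the proviso that $g_{\ell+1}$ does not take the value $-\infty$.

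The main (and essentially the only nontrivial) point is thus to verify this proviso at each intermediate stage, and this is where monotonicity of the infimum does the work. Since $g$ is obtained from $g_{\ell}$ by the remaining $n-m-\ell$ elementary aggregations, the value of $g$ at any point is an infimum of values of $g_{\ell}$ over a nonempty fiber. Hence, if $g_{\ell}$ were to take the value $-\infty$ at some point $w$, then $g$ would take $-\infty$ at the corresponding aggregated point, contradicting the hypothesis that $g$ does not attain $-\infty$. Consequently no intermediate function $g_{\ell}$, and in particular no $g_{\ell+1}$, attains $-\infty$, so Lemma~\ref{LMeleaggrMnat} applies and $g_{\ell+1}$ is jump \Mnat-convex. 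The induction then yields that $g = g_{n-m}$ is jump \Mnat-convex, as claimed.
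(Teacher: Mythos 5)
Your proof is correct and follows the same route the paper intends: the paper derives Theorem~\ref{THjmnatfnopAggr} directly from Lemma~\ref{LMeleaggrMnat} by decomposing a general aggregation into elementary aggregations, exactly as you do. Your explicit verification that no intermediate function attains $-\infty$ (via monotonicity of the iterated infimum) is a detail the paper leaves implicit, and it is handled correctly.
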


Theorem~\ref{THjmnatfnopAggr} implies that if 
a set $S \subseteq \ZZ\sp{N}$ 
is an s.e.~jump system,
the subset of $\ZZ\sp{m}$ defined by 
\begin{equation} \label{aggrsetdef}
 T = \{ (y_{1}, y_{2}, \dots , y_{m}) \in  \ZZ\sp{m} \mid
  x \in S,  \  x(N_{j}) = y_{j} \ (j=1,2,\ldots,m) \}
\end{equation}
is also an s.e.~jump system.

\subsection{Convolution}
\label{SCconvol}

For two functions $f_{1} : \ZZ\sp{N} \to \RR \cup \{ +\infty \}$ 
and $f_{2} : \ZZ \sp{N} \to \RR \cup \{ +\infty \}$, 
we define their (infimum) {\em convolution} as a function 
$f_{1} \Box f_{2} : \ZZ\sp{N} \to \RR \cup \{ +\infty , -\infty\}$ given by 
\begin{equation} \label{convolfndef}
 (f_{1} \Box f_{2})(x) = \inf \{ f_{1}(x_{1}) + f_{2}(x_{2}) 
  \mid x_{1} + x_{2} = x,\  x_{1}\in \ZZ\sp{N}, \ x_{2} \in \ZZ\sp{N} \}. 
\end{equation}

The following theorem states that the convolution operation preserves jump \Mnat-convexity,
which extends \cite[Theorem 12]{KMT07jump} for jump M-convex functions.

\begin{theorem} \label{THjmnatfnopConvol}
The convolution of jump \Mnat-convex functions is jump \Mnat-convex,
provided it does not take the value $-\infty$.
\end{theorem}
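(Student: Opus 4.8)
The plan is to express the convolution as an \emph{aggregation} of a \emph{direct sum}, thereby reducing Theorem~\ref{THjmnatfnopConvol} to the already-established aggregation result (Theorem~\ref{THjmnatfnopAggr}). This parallels the proof of \cite[Theorem 12]{KMT07jump} for jump M-convex functions, where elementary aggregation serves as the technical pivot; the key point is that Theorem~\ref{THjmnatfnopAggr} is available for jump \Mnat-convex functions and its proof does not itself invoke convolution.

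Concretely, let $f_{1}, f_{2} : \ZZ\sp{N} \to \RR \cup \{+\infty\}$ be jump \Mnat-convex, and let $N'=\{1',2',\ldots,n'\}$ be a disjoint copy of $N$. First I would form the direct sum $F : \ZZ\sp{N \cup N'} \to \RR \cup \{+\infty\}$ defined by $F(u,v) = f_{1}(u)+f_{2}(v)$ for $u \in \ZZ\sp{N}$, $v \in \ZZ\sp{N'}$. I claim $F$ is jump \Mnat-convex: for $(u,v),(u',v') \in \dom F$ and an $((u,v),(u',v'))$-increment $s$, the increment lies entirely in one block, say the $N$-block, and its restriction is a $(u,u')$-increment. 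Applying (J\Mnat-EXC) for $f_{1}$ within that block, with the other block frozen, yields case (i) or (ii) of (J\Mnat-EXC) for $F$, because the $f_{2}$-values cancel on both sides of the inequality. This is a routine coordinatewise verification, entirely analogous to Propositions~\ref{PRjmnatfnop1} and \ref{PRjmnatfnopAdd}.

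Next I would aggregate $F$ with respect to the partition $\mathcal{P}$ of $N \cup N'$ into the $n$ pairs $\{i,i'\}$ $(i \in N)$. By the definition \eqref{aggrfndef}, the resulting function is
\begin{align*}
g(x) &= \inf\{ F(u,v) \mid u_{i} + v_{i'} = x_{i} \ (i \in N) \} \\
     &= \inf\{ f_{1}(u)+f_{2}(v) \mid u+v = x \}
      = (f_{1} \Box f_{2})(x),
\end{align*}
so $g$ is exactly the convolution. Since $F$ is jump \Mnat-convex and, by hypothesis, $g = f_{1} \Box f_{2}$ never takes the value $-\infty$, Theorem~\ref{THjmnatfnopAggr} applies directly and shows that $g$ is jump \Mnat-convex, which completes the argument.

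I expect no serious obstacle, since the only genuinely nontrivial ingredient—that aggregation preserves jump \Mnat-convexity—is already in hand. The two points requiring care are (a) checking that $F$ inherits (J\Mnat-EXC), which is immediate because an increment never mixes the two blocks, and (b) transferring the finiteness proviso: because the final aggregate is literally $f_{1}\Box f_{2}$, the assumption that the convolution avoids $-\infty$ is precisely the hypothesis needed to apply Theorem~\ref{THjmnatfnopAggr}. I would deliberately avoid routing the proof through the lifting $f \mapsto \tilde f$ followed by a projection of $\tilde f_{1} \Box \tilde f_{2}$: although $f_{1}\Box f_{2}$ does arise this way (after folding the auxiliary coordinate $x_{0}$ modulo $2$), the relevant projection statement, Proposition~\ref{PRjmnatfnopRestProj}(2), is itself derived from Theorem~\ref{THjmnatfnopConvol}, so that route would be circular.
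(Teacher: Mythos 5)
Your proposal is correct and follows essentially the same route as the paper: form the direct sum $F(u,v)=f_{1}(u)+f_{2}(v)$ on a disjoint union of two copies of $N$, observe that it is jump \Mnat-convex, and aggregate over the partition into corresponding pairs to recover $f_{1}\conv f_{2}$, invoking Theorem~\ref{THjmnatfnopAggr}. Your additional remarks---the block-wise verification of (J\Mnat-EXC) for the direct sum and the observation that routing through projection would be circular---are both accurate and consistent with the paper's logical ordering.
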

\begin{proof}
Consider the direct sum 
$f:\ZZ \sp{N} \times \ZZ \sp{N} \to \RR \cup \{ +\infty \}$ 
of $f_{1}$ and $f_{2}$, which is defined by 
\[
f(x_{1}, x_{2}) = f_{1}(x_{1}) + f_{2}(x_{2}), 
\]
where $x_{1}, x_{2} \in \ZZ\sp{N}$. 
Then $f$ is jump \Mnat-convex by the assumed jump \Mnat-convexity of $f_{1}$ and $f_{2}$. 
Let $\mathcal{P}$ be the partition consisting of pairs of the corresponding elements. 
Then the aggregation of $f$ coincides with $f_{1} \Box f_{2}$.
Hence, by Theorem~\ref{THjmnatfnopAggr}, $f_{1} \Box f_{2}$ is jump \Mnat-convex. 
\end{proof}

Theorem~\ref{THjmnatfnopConvol} implies that 
the Minkowski sum $S_{1} + S_{2}$ of s.e.~jump systems $S_{1}$ and $S_{2}$ 
is an s.e.~jump system.

The composition operation is closely related to convolution.
Let 
$f_{1}: \ZZ\sp{S_{1}} \to \RR \cup \{ +\infty \}$ 
and $f_{2}: \ZZ\sp{S_{2}} \to \RR \cup \{ +\infty \}$,
where
$S_{1}$ and $S_{2}$ are arbitrary (finite) sets.
Put $N_{0} = S_{1} \cap S_{2}$,
$N_{1} = S_{1} \setminus N_{0}$, and $N_{2} = S_{2} \setminus N_{0}$.
The {\em composition} of 
$f_{1}$ and $f_{2}$ is defined as a function 
$g: \ZZ \sp{N_{1} \cup N_{2}}  \to \RR \cup \{ +\infty , -\infty \}$ 
given by
\begin{equation} \label{composfndef}
 g(y_{1}, y_{2}) =
 \inf\{ f_{1}(y_{1} , z_{1}) + 
        f_{2}(y_{2} , z_{2}) \mid
        z_{1} = z_{2} \in \ZZ\sp{N_{0}}
      \} 
\end{equation}
where
$y_{1} \in \ZZ\sp{N_{1}}$ and $y_{2} \in \ZZ\sp{N_{2}}$.

\begin{theorem} \label{THjmnatfnopCompos}
The composition of two jump \Mnat-convex functions is jump \Mnat-convex,
provided it does not take the value $-\infty$
and its effective domain is nonempty.
\end{theorem}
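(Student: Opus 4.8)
The plan is to obtain the composition $g$ by applying, in turn, operations already shown to preserve jump \Mnat-convexity: a coordinate sign reversal (Proposition~\ref{PRjmnatfnop1}(2)), a direct sum, an aggregation (Theorem~\ref{THjmnatfnopAggr}), and a restriction (Proposition~\ref{PRjmnatfnopRestProj}(1)). Take two disjoint copies $N_{0}\sp{(1)}$ and $N_{0}\sp{(2)}$ of the shared index set $N_{0}=S_{1}\cap S_{2}$ and work over the disjoint union $N_{1}\cup N_{0}\sp{(1)}\cup N_{2}\cup N_{0}\sp{(2)}$, writing a generic point as $(y_{1},z_{1},y_{2},z_{2})$ with $y_{1}\in\ZZ\sp{N_{1}}$, $y_{2}\in\ZZ\sp{N_{2}}$, and $z_{1},z_{2}\in\ZZ\sp{N_{0}}$.

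First I would reverse the sign of the $N_{0}$-coordinates of $f_{2}$, setting $\hat f_{2}(y_{2},z_{2})=f_{2}(y_{2},-z_{2})$, which is jump \Mnat-convex by Proposition~\ref{PRjmnatfnop1}(2). The direct sum $F(y_{1},z_{1},y_{2},z_{2})=f_{1}(y_{1},z_{1})+\hat f_{2}(y_{2},z_{2})$ is then jump \Mnat-convex, exactly as the direct sum of $f_{1}$ and $f_{2}$ was used in the proof of Theorem~\ref{THjmnatfnopConvol} (for a direct sum every increment lies in a single block and (J\Mnat-EXC) is inherited blockwise). Next, aggregating each pair $\{k\sp{(1)},k\sp{(2)}\}$ with $k\in N_{0}$ into a single coordinate $w_{k}=z_{1}\sp{(k)}+z_{2}\sp{(k)}$, while keeping the coordinates of $N_{1}$ and $N_{2}$ as singletons, gives by Theorem~\ref{THjmnatfnopAggr} the jump \Mnat-convex function
\[
G(y_{1},y_{2},w)=\inf\{\,f_{1}(y_{1},z_{1})+f_{2}(y_{2},-z_{2})\mid z_{1}+z_{2}=w\,\}.
\]
Finally, restricting $G$ to $w=\veczero$ via Proposition~\ref{PRjmnatfnopRestProj}(1), and using that $z_{1}+z_{2}=\veczero$ forces $z_{2}=-z_{1}$ (so $f_{2}(y_{2},-z_{2})=f_{2}(y_{2},z_{1})$), yields
\[
G(y_{1},y_{2},\veczero)=\inf\{\,f_{1}(y_{1},z_{1})+f_{2}(y_{2},z_{1})\mid z_{1}\in\ZZ\sp{N_{0}}\,\}=g(y_{1},y_{2}),
\]
which establishes jump \Mnat-convexity of $g$; the assumption $\dom g\neq\emptyset$ guarantees that this final restriction has nonempty effective domain.

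The main obstacle is the finiteness proviso: to invoke Theorem~\ref{THjmnatfnopAggr} I must know that $G$ never takes the value $-\infty$, whereas the hypothesis supplies this only on the slice $w=\veczero$, where $G$ equals $g$. The resolution rests on the observation that, for fixed $(y_{1},y_{2})$, one has $G(y_{1},y_{2},w)=\inf_{z}[\,f_{1}(y_{1},z)+f_{2}(y_{2},z-w)\,]$, which differs from $g(y_{1},y_{2})$ only by a fixed translation of the $N_{0}$-argument of $f_{2}$. Whether such an infimum equals $-\infty$ is governed by the common recession directions of $f_{1}(y_{1},\cdot)$ and $f_{2}(y_{2},\cdot)$ together with their linear recession rates, and both the recession cone and these rates are insensitive to a fixed translation; hence $G(y_{1},y_{2},w)=-\infty$ for some $w$ would force $g(y_{1},y_{2})=-\infty$, contradicting the hypothesis, so $G>-\infty$ everywhere and the aggregation step is legitimate. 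I would close by noting that the same construction can be packaged through convolution instead: extending $f_{1}$ and $\hat f_{2}$ by the indicator of $\veczero$ on the complementary variables to functions on the common ground set $N_{1}\cup N_{2}\cup N_{0}$ realizes $g$ as the restriction to $w=\veczero$ of their convolution, so that Theorem~\ref{THjmnatfnopConvol} may be used in place of Theorem~\ref{THjmnatfnopAggr}.
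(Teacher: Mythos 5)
Your construction --- sign-reverse the $N_{0}$-coordinates of $f_{2}$, form a direct sum, aggregate the paired $N_{0}$-coordinates via Theorem~\ref{THjmnatfnopAggr}, and restrict to $w=\veczero$ --- is essentially the paper's own proof with the convolution step unfolded into its definition as an aggregation of a direct sum, and your closing remark about packaging the argument through Theorem~\ref{THjmnatfnopConvol} is exactly what the paper does. Your additional discussion of why $G>-\infty$ off the slice $w=\veczero$ concerns a proviso that the paper's proof leaves implicit; the recession-rate heuristic you offer is not rigorous for functions that are merely jump \Mnat-convex, but this side issue does not distinguish your argument from the paper's, which likewise takes the hypothesis of Theorem~\ref{THjmnatfnopConvol} for granted at that point.
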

\begin{proof}
Consider functions $h_{1}$ and $h_{2}$ defined by
\begin{align*}
h_{1}(y_{1}, z_{1}, {\bm 0}) 
&= f_{1}(y_{1} , z_{1})  
\qquad
 (y_{1} \in \ZZ\sp{N_{1}}, z_{1} \in \ZZ\sp{N_0}, {\bm 0} \in \ZZ\sp{N_{2}}), 
\\
h_{2}({\bm 0} , z_{2} , y_{2}) 
&= f_{2}(y_{2} , -z_{2}) 
\qquad
({\bm 0} \in \ZZ\sp{N_{1}}, z_{2} \in \ZZ\sp{N_0}, y_{2} \in \ZZ\sp{N_{2}}).
\end{align*}
These functions are jump \Mnat-convex, and hence, by Theorem~\ref{THjmnatfnopConvol},
their convolution $h_{1} \Box h_{2}$ is jump \Mnat-convex.
The restriction of $h_{1} \Box h_{2}$ to $N_{1} \cup N_{2}$,
which is jump \Mnat-convex by Proposition~\ref{PRjmnatfnopRestProj}(1),
coincides with the composition $g$ in \eqref{composfndef}.
\end{proof}

In \cite{BouC95} the composition operation is defined for jump systems,
and it is shown that the composition of two jump systems is a jump system.
Theorem~\ref{THjmnatfnopCompos} implies that
the composition of two s.e.~jump systems is 
an s.e.~jump system.

%%%%%%%%%%%%%%
\begin{remark} \rm \label{RMprojFromConvol}
The convolution operation contains
the projection operation as a special case.
Let
$ g(y)  =  \inf \{ f(y,z) \mid z \in \ZZ\sp{N \setminus U} \}$
be the projection of $f$ to $U$
as defined in \eqref{projfndef}.
Let $\varphi$ be the indicator function of the (cylinder) set
$\{ (y,z) \in \ZZ\sp{U} \times \ZZ\sp{N \setminus U} \mid y = \veczero_{U} \}$,
where 
$\varphi$ is a separable convex function.
The convolution $f \conv \varphi$ is given as
\begin{align*}
(f \conv \varphi)(y,z) &= 
\inf \{ f(y',z') + \varphi(y'',z'')  \mid (y,z) = (y',z') + (y'',z'') \}
\\ & =
\inf \{ f(y',z') \mid  (y,z) = (y',z') + (\veczero,z'') \}
\\ & =
\inf \{ f(y,z - z'') \mid z'' \in \ZZ\sp{N \setminus U} \} 
\\ & =
\inf \{ f(y, z') \mid z' \in \ZZ\sp{N \setminus U} \} 
\\ & =
g(y).
\end{align*}
Thus, the value of  projection $g(y)$
is equal to that of convolution $(f \conv \varphi)(y,z)$ for any $z$. 
In this sense the projection can be regarded as a special case of the convolution.
\finbox
\end{remark}

\subsection{Splitting}
\label{SCsplit}

Suppose that we are given
a family of  disjoint nonempty sets 
$\{U_{i} \mid i \in N \}$ indexed by $N = \{ 1, 2, \dots , n\}$. 
Let $U = \bigcup_{i \in N} U_{i}$. 
For a function $f: \ZZ \sp{N} \to \RR \cup \{+\infty\}$, 
the {\em splitting} of $f$ to $U$ is defined as a function
$g: \ZZ \sp{U} \to \RR \cup \{+\infty\}$ given by
\begin{equation} \label{splitfndef}
 g(y_{1}, y_{2}, \dots , y_{n}) 
  = f( y_{1}(U_{1}), y_{2}(U_{2}), \dots , y_{n}(U_{n})), 
\end{equation}
where $y_{i} = ( y_{ij} \mid  j \in U_{i} )$ is 
an integer vector of dimension $|U_{i}|$
and $y_{i}(U_{i}) = \sum\{  y_{ij} \mid j \in U_{i} \}$ is 
the component sum of vector $y_{i}$. 
If $|U| = n+1$
(in which case we have $|U_{k}|=2$ for some $k$ and $|U_{j}| =1$ for other $j\not= k$),
this is called an {\em elementary splitting}.
Any (general) splitting can be obtained by repeated applications of elementary splittings.

For jump M-convex functions the following fact is 
used in discussing transformation by a network
(see \cite{KMT07jump} for details).

\begin{lemma}[{\cite[Lemma 6]{KMT07jump}}]\label{LMelesplitM}
The elementary splitting of a jump M-convex function is jump M-convex.
\finbox
\end{lemma}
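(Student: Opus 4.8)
The plan is to verify the jump M-convex exchange axiom \textrm{(JM-EXC)} for the splitting $g$ directly, transporting the exchange property of $f$ through the natural aggregation map. Since an elementary splitting has one block of size two and all others of size one, after a coordinate permutation (which preserves jump M-convexity by Proposition~\ref{PRjmnatfnop1}) I may assume the last coordinate is split, so that $g:\ZZ\sp{n+1}\to\RR\cup\{+\infty\}$ is $g(y)=f(\Phi(y))$, where $\Phi:\ZZ\sp{n+1}\to\ZZ\sp{n}$ is the sum map $\Phi(y)=(y_{1},\dots,y_{n-1},y_{n}+y_{n+1})$. Because $\Phi$ preserves the component sum, $\dom g=\Phi\sp{-1}(\dom f)$ inherits constant parity from $\dom f$, and no infimum is taken, so $g$ never attains $-\infty$. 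It then remains to supply, for each $y,y'\in\dom g$ and each $(y,y')$-increment $s$, a matching $(y+s,y')$-increment $t$.

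First I would set $x=\Phi(y)$, $x'=\Phi(y')$ and inspect $\bar s=\Phi(s)$. When $s=\pm\unitvec{j}$ for a non-split coordinate $j$, then $\bar s=\pm\unitvec{j}$ is automatically an $(x,x')$-increment; when $s$ is $\pm$ one of the two split coordinates, $\bar s=\pm\unitvec{n}$ in $\ZZ\sp{n}$ and this need \emph{not} be an increment. This gives two cases. In the good case ($\bar s$ an $(x,x')$-increment) I invoke \textrm{(JM-EXC)} for $f$ to obtain an $(x+\bar s,x')$-increment $u$ with $f(x)+f(x')\ge f(x+\bar s+u)+f(x'-\bar s-u)$, both arguments lying in $\dom f$, and then lift $u$ to $t$ with $\Phi(t)=u$. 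If $u$ touches a non-split coordinate, the identical increment works, since $(y+s)_{j}=(x+\bar s)_{j}$ and $y'_{j}=x'_{j}$ for those coordinates. The only delicate lift is $u=\pm\unitvec{n}$: taking $u=+\unitvec{n}$ for concreteness, validity of $u$ gives $(y+s)_{n}+(y+s)_{n+1}<y'_{n}+y'_{n+1}$, which forces $(y+s)_{n}<y'_{n}$ or $(y+s)_{n+1}<y'_{n+1}$, so the corresponding $+$increment is a valid $t$, and $g=f\circ\Phi$ carries the inequality over verbatim.

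The remaining case is where $s$ lies in a split coordinate but $\bar s$ fails to be an increment, say $s=+\unitvec{n}$ with $y_{n}<y'_{n}$ yet $y_{n}+y_{n+1}\ge y'_{n}+y'_{n+1}$. Then necessarily $y_{n+1}>y'_{n+1}$, so $t=-\unitvec{n+1}$ is a valid $(y+s,y')$-increment; moreover $\Phi(s+t)=0$, hence $\Phi(y+s+t)=x$ and $\Phi(y'-s-t)=x'$, and the exchange inequality holds with equality. Covering the symmetric decrement sub-cases the same way, a suitable $t$ exists in every case, establishing \textrm{(JM-EXC)} for $g$ (and with it the fact that $\dom g$ is a constant-parity jump system).

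I expect the main obstacle to be exactly this interplay between the boundary case and the split-coordinate lift $t\in\{\pm\unitvec{n},\pm\unitvec{n+1}\}$. Both rest on the same elementary observation that a strict inequality between two coordinate sums forces a strict inequality in at least one summand, which is what guarantees the chosen increment stays inside the integer box $[\,(y+s)\wedge y',(y+s)\vee y'\,]_{\ZZ}$; handling $\bar s$ when it is not itself an increment is the point where a naive ``push everything through $\Phi$'' argument breaks down and the cancelling increment $t=-\unitvec{n+1}$ must be used instead.
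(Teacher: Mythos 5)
Your proof is correct. Note first that the paper itself gives no proof of this lemma: it is imported verbatim from \cite[Lemma 6]{KMT07jump} (hence the statement ends with no proof environment), so there is no in-paper argument to compare against; what you have written is a self-contained direct verification of {\rm (JM-EXC)} for $g=f\circ\Phi$. The case split is the right one, and the two delicate points are both handled correctly: (a) lifting an increment $u=\pm\unitvec{n}$ of the aggregated coordinate back to one of the two split coordinates, using that a strict inequality between the sums $(y+s)_{n}+(y+s)_{n+1}$ and $y'_{n}+y'_{n+1}$ forces a strict inequality in at least one summand, so the lifted $t$ stays in the box $[(y+s)\wedge y',(y+s)\vee y']_{\ZZ}$; and (b) the boundary case where $s$ lives in a split coordinate but $\Phi(s)$ is not an $(x,x')$-increment, where the cancelling increment $t$ in the partner split coordinate gives $\Phi(s+t)=0$ and the exchange inequality holds with equality — this is exactly the step a naive transport through $\Phi$ would miss. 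Two cosmetic remarks: the appeal to Proposition~\ref{PRjmnatfnop1} for permuting coordinates is stated there for jump \Mnat-convex functions, though the identical one-line argument works for {\rm (JM-EXC)}; and the observation that $\dom g$ is a constant-parity jump system is not needed as a separate hypothesis, since {\rm (JM-EXC)} for $g$ already implies it.
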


We can extend this lemma to jump \Mnat-convex functions
by using the relation between jump \Mnat- and M-convexity established in Theorem~\ref{THjmnatfn}.

\begin{lemma} \label{LMelesplitMnat}
The elementary splitting of a jump \Mnat-convex function is jump \Mnat-convex.
\end{lemma}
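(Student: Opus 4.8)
The plan is to mirror exactly the strategy used for aggregation in Lemma~\ref{LMeleaggrMnat}, reducing the jump \Mnat-convex statement to the jump M-convex statement of Lemma~\ref{LMelesplitM} by passing through the parity-augmented function $\tilde{\,\cdot\,}$ of \eqref{ftilde}. Let $f:\ZZ\sp{n}\to\RR\cup\{+\infty\}$ be jump \Mnat-convex, and let $g:\ZZ\sp{U}\to\RR\cup\{+\infty\}$ be its elementary splitting, where $|U|=n+1$, say $|U_{k}|=2$ for a single index $k$ and $|U_{j}|=1$ otherwise. As in the proof of Lemma~\ref{LMeleaggrMnat}, I would introduce $\tilde f:\ZZ\sp{n+1}\to\RR\cup\{+\infty\}$ and $\tilde g:\ZZ\sp{n+2}\to\RR\cup\{+\infty\}$ by the parity construction \eqref{ftilde}, let $h$ denote the elementary splitting of $\tilde f$ (splitting the same coordinate $k$, leaving the auxiliary coordinate $x_{0}$ untouched), and then prove the key identity $h=\tilde g$. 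Once that identity is in hand, the logical chain closes automatically: $f$ jump \Mnat-convex $\Rightarrow$ $\tilde f$ jump M-convex by Theorem~\ref{THjmnatfn}; then $h$ jump M-convex by Lemma~\ref{LMelesplitM}; then $\tilde g=h$ jump M-convex; then $g$ jump \Mnat-convex by the ``if'' part of Theorem~\ref{THjmnatfn}.

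The only substantive work is verifying $h=\tilde g$, and here I expect a parity bookkeeping subtlety that is mildly more delicate than in the aggregation case. In aggregation, summing two coordinates into one preserves the total component sum, so the parity coordinate transfers transparently. In splitting, the value $g(y_{1},\dots,y_{n})$ is $f(y_{1}(U_{1}),\dots,y_{n}(U_{n}))$, and the component sum of the split vector $(y_{ij})$ over all of $U$ equals $\sum_{i} y_{i}(U_{i})$, which is precisely the component sum of the argument fed to $f$. Hence $\pi$ of the split point agrees with $\pi$ of the aggregated point, so the parity is again preserved under splitting. Concretely, for a split point $w=(y_{1},\dots,y_{n})\in\ZZ\sp{U}$ write $\Sigma(w)=\sum_{i}y_{i}(U_{i})$; then $\pi(w)=\pi(\Sigma\text{-image})$, and the augmenting coordinate $y_{0}$ behaves identically on both sides. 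I would check the two cases $y_{0}=\pi(w)$ and $y_{0}\neq\pi(w)$ exactly as in Lemma~\ref{LMeleaggrMnat}: in the matching-parity case all three of $\tilde g$, $g$, and $h$ carry the same finite value $f(\dots)$, while in the mismatched-parity case both $\tilde g(y_{0},w)=+\infty$ by definition and $h(y_{0},w)=+\infty$ because every summand $\tilde f(y_{0},\dots)$ entering the splitting-defining expression is $+\infty$.

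The main obstacle, such as it is, lies in setting up the index notation cleanly enough that the identity $h=\tilde g$ is manifestly a coordinate-wise restatement of the parity-preservation observation, rather than an opaque computation. In particular one must be careful that the auxiliary parity coordinate $x_{0}$ of $\tilde f$ is treated as an ordinary ``singleton'' coordinate that is not split, so that the elementary splitting $h$ of $\tilde f$ lives in $\ZZ\sp{n+2}=\ZZ\times\ZZ\sp{U}$ and matches the domain of $\tilde g$. Since splitting is the formal adjoint of aggregation in the sense used in \cite{KMT07jump}, I anticipate no new combinatorial difficulty beyond this notational care; the proof should be a faithful transcription of the aggregation argument with $\inf$-over-sums replaced by the splitting substitution, and no blossom-type or induction argument is required.
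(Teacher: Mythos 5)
Your proposal is correct and follows the paper's proof essentially verbatim: the same reduction via $\tilde f$, $\tilde g$, and the elementary splitting $h$ of $\tilde f$, the same key identity $h=\tilde g$ established from the parity-preservation observation $\pi(y',\eta_{1},\eta_{2})=\pi(y',\eta_{1}+\eta_{2})$, and the same chain through Theorem~\ref{THjmnatfn} and Lemma~\ref{LMelesplitM}. The only quibble is that in the mismatched-parity case the splitting is a single substitution rather than an infimum over summands, so $h(y_{0},w)=+\infty$ simply because the one value $\tilde f(y_{0},y',\eta_{1}+\eta_{2})$ is $+\infty$; this does not affect the argument.
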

\begin{proof}
Let 
$f: \ZZ\sp{n} \to \RR \cup \{ +\infty \}$
be a jump \Mnat-convex function, and
$g: \ZZ\sp{n+1} \to \RR \cup \{ +\infty \}$
be its elementary splitting given by
\begin{equation*} 
g(y_{1}, \dots , y_{n-1}, y_{n}, y_{n+1}) 
=  f(y_{1}, \dots , y_{n-1}, y_{n} + y_{n+1})  ,
\end{equation*}
where $y_{j} \in \ZZ$ for $j=1,2, \dots , n+1$.
As in \eqref{ftilde}
we define 
$\tilde f: \ZZ\sp{n+1} \to \RR \cup \{ +\infty \}$
and 
$\tilde g: \ZZ\sp{n+2} \to \RR \cup \{ +\infty \}$
 by
\begin{align}
\tilde f(x_{0}, x) &= 
   \left\{  \begin{array}{ll}
   f(x)            &   (x_{0} = \pi(x)) ,      \\
   + \infty      &   (\mbox{\rm otherwise}) , \\
                      \end{array}  \right.
   \label{ftildesplit}
\\ 
\tilde g(y_{0}, y) &= 
   \left\{  \begin{array}{ll}
   g(y)            &   (y_{0} = \pi(y)) ,      \\
   + \infty      &   (\mbox{\rm otherwise}) , \\
                      \end{array}  \right.
   \label{gtildesplit}
\end{align}
where
$x = (x_{1}, x_{2}, \ldots ,  x_{n})$
and $y = (y_{1}, y_{2}, \dots , y_{n+1})$.
Let $h$ denote the elementary splitting of $\tilde f$, that is,
\begin{equation*} 
h(y_{0},y_{1}, \dots , y_{n-1}, y_{n}, y_{n+1}) 
=  \tilde f(y_{0},y_{1}, \dots , y_{n-1}, y_{n} + y_{n+1})  .
\end{equation*}
It turns out that $h$ coincides with $\tilde g$, which we prove later.
Since $f$ is jump \Mnat-convex,
$\tilde f$ is jump M-convex by Theorem~\ref{THjmnatfn} (``only if'' part).
Then, by Lemma \ref{LMelesplitM},
$h$ is jump M-convex.
By $h =\tilde g$ (shown below),  $\tilde g$ is jump M-convex.
Finally,  Theorem~\ref{THjmnatfn}  (``if'' part) shows that $g$ is jump \Mnat-convex.

It remains to prove $h = \tilde g$.  We use short-hand notations:
\[
y'=(y_{1}, \dots , y_{n-1}),  
\quad
\eta_{1} = y_{n},
\quad
\eta_{2} = y_{n+1}.
\]
Then we have
$y=(y', \eta_{1}, \eta_{2})$ and
\begin{align} 
g(y', \eta_{1}, \eta_{2}) 
&= f(y', \eta_{1} + \eta_{2}) ,
\label{gsplit}
\\
h(y_{0}, y',\eta_{1}, \eta_{2}) 
&= \tilde f(y_{0}, y', \eta_{1} + \eta_{2}). 
\label{hsplit}
\end{align}
Obviously, we have
$ \pi(y',\eta_{1}, \eta_{2}) = \pi(y', \eta_{1} + \eta_{2})$.
If $y_{0} =  \pi(y',\eta_{1}, \eta_{2})$, 
we have
$ y_{0} = \pi(y',\eta_{1} + \eta_{2})$ and hence
\begin{align*} 
h(y_{0}, y',\eta_{1}, \eta_{2}) 
&=  f(y', \eta_{1} + \eta_{2}) 
= g(y', \eta_{1} , \eta_{2}) 
= \tilde g(y_{0},y', \eta_{1} , \eta_{2}) 
\end{align*}
by  \eqref{ftildesplit}, \eqref{gtildesplit}, 
\eqref{gsplit}, and \eqref{hsplit}.
If $y_{0} \not=  \pi(y',\eta_{1}, \eta_{2})$, we have 
$\tilde g(y_{0},y', \eta_{1} , \eta_{2}) = +\infty$ 
by the definition of $\tilde g$ in \eqref{gtildesplit},
whereas 
$h(y_{0}, y',\eta_{1}, \eta_{2}) = +\infty$ 
since 
$y_{0} \not=  \pi(y',\eta_{1} + \eta_{2})$
and hence
$\tilde f(y_{0}, y', \eta_{1} + \eta_{2}) = +\infty$
by \eqref{ftildesplit}.
\end{proof}

It follows from Lemma \ref{LMelesplitMnat}
that the splitting of a jump \Mnat-convex function is jump \Mnat-convex.
This extends \cite[Theorem 7]{KMT07jump} for jump M-convex functions.

\begin{theorem}\label{THjmnatfnopSplit}
The splitting of a jump \Mnat-convex function is jump \Mnat-convex.
\finbox
\end{theorem}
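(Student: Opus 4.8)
The plan is to reduce the general splitting operation to the elementary splitting case already handled in Lemma~\ref{LMelesplitMnat}, exactly in parallel with how Theorem~\ref{THjmnatfnopAggr} was obtained from Lemma~\ref{LMeleaggrMnat}. The key structural observation, stated just before the definition of elementary splitting, is that any general splitting can be realized as a finite sequence of elementary splittings. So the entire theorem follows by induction on the number of elementary steps, with Lemma~\ref{LMelesplitMnat} serving as both the base case and the inductive step.

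More concretely, I would first verify that a splitting of $f$ with respect to the family $\{U_{i} \mid i \in N\}$ can be decomposed: process the blocks $U_{i}$ one at a time, and within a block of size $|U_{i}| = k$ expand a single coordinate into $k$ coordinates by $k-1$ successive elementary splittings (each of which replaces one coordinate by two whose sum equals the original). If $g_{0} = f$ denotes the starting function and $g_{1}, g_{2}, \dots, g_{r} = g$ the intermediate functions obtained after each elementary splitting, then each $g_{\ell}$ is an elementary splitting of $g_{\ell-1}$ in the precise sense of \eqref{splitfndef} with $|U| = (\dim g_{\ell-1}) + 1$.

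Next I would run the induction. The base case $\ell = 0$ is the hypothesis that $f$ is jump \Mnat-convex. For the inductive step, assuming $g_{\ell-1}$ is jump \Mnat-convex, Lemma~\ref{LMelesplitMnat} immediately gives that its elementary splitting $g_{\ell}$ is jump \Mnat-convex. After $r$ steps we conclude that $g = g_{r}$ is jump \Mnat-convex. I would note that, unlike aggregation, projection, and convolution, no ``$-\infty$'' proviso is needed here: elementary splitting is defined by substitution rather than infimization (see \eqref{splitfndef}), so it never introduces the value $-\infty$ and never produces an empty effective domain, which is why Lemma~\ref{LMelesplitMnat} and hence this theorem carry no such hypothesis.

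The main point requiring care is the bookkeeping of the decomposition: I must check that each intermediate $g_{\ell}$ genuinely matches the definition of an elementary splitting (one distinguished block of size two, all others singletons) so that Lemma~\ref{LMelesplitMnat} applies verbatim at every step, and that composing these elementary splittings in the right order reproduces the prescribed block structure $\{U_{i}\}$ of the general splitting. This is a routine but slightly tedious reindexing; once it is in place, no further inequalities or exchange-axiom verifications are needed, since all the analytic content is already packaged inside Lemma~\ref{LMelesplitMnat}, which in turn rests on Theorem~\ref{THjmnatfn} and Lemma~\ref{LMelesplitM}.
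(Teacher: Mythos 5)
Your proposal is correct and matches the paper's argument exactly: the paper derives Theorem~\ref{THjmnatfnopSplit} directly from Lemma~\ref{LMelesplitMnat} by noting that any splitting is a composition of elementary splittings, which is precisely your induction. Your side remark that no ``$-\infty$'' proviso is needed (since splitting is substitution, not infimization) is also consistent with the paper's statement of the theorem.
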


Theorem~\ref{THjmnatfnopSplit} implies that if 
a set $S \subseteq \ZZ\sp{N}$ 
is an s.e.~jump system,
the subset of $\ZZ\sp{U}$ defined by 
\begin{equation} \label{splitsetdef}
 T = \{ (y_{1}, y_{2}, \dots , y_{n}) \in  \ZZ\sp{U} \mid
  y_{i} \in \ZZ\sp{U_{i}}, \  x_{i} = y_{i}(U_{i}) \ \ (i \in N) 
   \}
\end{equation}
is also an s.e.~jump system.

\subsection{Transformation by networks}
\label{SCnettrans}

In this section, we consider the transformation of a jump \Mnat-convex function through a network. 
Let $G=(V, A; S, T)$ be a directed graph with vertex set $V$, arc set $A$, 
entrance set $S$, and exit set $T$, where $S$ and $T$ are disjoint subsets of $V$. 
For each $a\in A$, the cost of integer-flow in $a$ is represented by a function 
$\varphi_a : \ZZ \to \RR \cup \{ +\infty \}$, which is assumed to be convex
in the sense of \eqref{univarconvdef}. 

Given a function $f : \ZZ\sp{S} \to \RR \cup \{ +\infty \}$ associated 
with the entrance set $S$ of the network, 
we define a function $g : \ZZ\sp{T} \to \RR \cup \{ + \infty, -\infty \}$ 
on the exit set $T$ by 
\begin{align}
g(y) = \inf_{\xi , x} & \big\{ f(x) + \sum_{a\in A} \varphi_a (\xi (a)) 
\mid 
\partial \xi = (x, -y, \bm{0}), 
\notag \\ &
 \xi \in \ZZ\sp{A}, 
(x, -y, \bm{0}) \in \ZZ\sp{S} \times \ZZ\sp{T} \times \ZZ\sp{V\setminus (S\cup T)} \big\} 
\quad (y \in \ZZ\sp{T}), 
\label{nettransfndef}
\end{align}
where $\partial \xi \in \ZZ\sp{V}$ is the vector of ``net supplies'' given by 
\[
\partial \xi (v) = \sum_{a : \ a \text{ leaves } v} \xi (a) 
- \sum_{a : \ a \text{ enters } v} \xi (a) \quad (v \in V). 
\]
If such $(\xi , x)$ does not exist, we define $g(y) = + \infty$. 
We may think of $g(y)$ as the minimum cost to meet a demand specification $y$ at the exit, 
where the cost consists of two parts, the cost $f(x)$ of supply or production of $x$ 
at the entrance 
and the cost $\sum_{a\in A} \varphi_a (\xi (a))$ of transportation through arcs; 
the sum of these is to be minimized over varying supply $x$ and flow $\xi$ subject to 
the flow conservation constraint $\partial \xi = (x, -y, \bm{0})$. 
We regard $g$ as a result of {\em transformation} (or {\em induction}) of $f$ by the network.

The following theorem shows that 
the transformation of a jump \Mnat-convex function by a network results in 
another jump \Mnat-convex function.
This theorem extends \cite[Theorem 14]{KMT07jump} for jump M-convex functions.

\begin{theorem} \label{THjmnatfnopNettrans}
Assume that $f$ is jump \Mnat-convex and $\varphi_a$ is convex for each $a\in A$. 
Then the function $g$ induced by a network $G=(V, A; S, T)$ is jump \Mnat-convex, 
provided $\dom g \not= \emptyset$ and $g > -\infty$.  
\end{theorem}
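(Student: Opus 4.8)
The plan is to reduce Theorem~\ref{THjmnatfnopNettrans} to the already-established operations for jump \Mnat-convex functions, exactly mirroring the strategy used for aggregation, convolution, and splitting. The key observation is that the network transformation in \eqref{nettransfndef} can be decomposed into a sequence of elementary building blocks, each of which is known (or just shown) to preserve jump \Mnat-convexity. First I would form the direct sum of the supply cost $f$ on $\ZZ\sp{S}$ with the separable convex arc-cost functions $\sum_{a \in A} \varphi_a$ on $\ZZ\sp{A}$; by Proposition~\ref{PRjmnatfnopAdd}(3) together with the fact that a direct sum of jump \Mnat-convex functions is jump \Mnat-convex (itself a consequence of (J\Mnat-EXC) applied coordinatewise), this combined function is jump \Mnat-convex on $\ZZ\sp{S} \times \ZZ\sp{A}$.

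The heart of the argument is to encode the flow-conservation constraint $\partial \xi = (x, -y, \bm{0})$ using the boundary operator $\partial$, which is a linear map determined by the incidence structure of $G$. The standard technique (as in \cite{KMT07jump}) is to realize $\partial$ through elementary splittings and aggregations: each arc $a = (u,v)$ contributes its flow value $\xi(a)$ positively to the net supply at $u$ and negatively at $v$, and this bookkeeping is implemented by splitting variables to duplicate flow contributions and then aggregating the contributions arriving at each vertex. Concretely, I would introduce, for each arc, split coordinates carrying $\pm\xi(a)$, then aggregate at every vertex $v$ all incoming and outgoing contributions together with the supply/demand variable, so that the aggregated value equals $\partial\xi(v) - (\text{supply/demand at }v)$. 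Restricting this aggregated coordinate to the value $\bm{0}$ at every vertex (via Proposition~\ref{PRjmnatfnopRestProj}(1) applied after sign reversal by Proposition~\ref{PRjmnatfnop1}(2)) enforces the conservation law, and projecting out (partial minimization over) the internal flow variables $\xi$ and the intermediate-vertex variables yields precisely $g$.

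I expect the main obstacle to be verifying that this chain of operations is legitimately closed under the hypotheses, rather than any single algebraic identity. Specifically, the elementary aggregation and projection operations are only guaranteed to preserve jump \Mnat-convexity when the resulting function does not take the value $-\infty$ (Lemma~\ref{LMeleaggrMnat} and Proposition~\ref{PRjmnatfnopRestProj}(2)); the hypothesis $g > -\infty$ and $\dom g \neq \emptyset$ must be propagated through each intermediate step so that every function in the composition stays finite from below and has nonempty effective domain. The second delicate point is the precise encoding of $\partial$ as a composite of splittings and aggregations: one must check that fixing the sign conventions for arc heads versus tails (using the sign-flip freedom in Proposition~\ref{PRjmnatfnop1}(2)) reproduces exactly the net-supply formula, so that the final restriction-and-projection genuinely yields \eqref{nettransfndef}.

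Once the decomposition is set up, the conclusion follows immediately by composing the relevant results: the direct sum is jump \Mnat-convex, splitting preserves this by Theorem~\ref{THjmnatfnopSplit}, aggregation by Theorem~\ref{THjmnatfnopAggr}, the change of variables and restriction by Propositions~\ref{PRjmnatfnop1} and \ref{PRjmnatfnopRestProj}(1), and the final partial minimization by Proposition~\ref{PRjmnatfnopRestProj}(2). Thus the entire network transformation is exhibited as a finite composition of jump \Mnat-convexity-preserving operations, and $g$ is jump \Mnat-convex provided the finiteness conditions hold at each stage, which is exactly what the hypotheses $\dom g \neq \emptyset$ and $g > -\infty$ ensure.
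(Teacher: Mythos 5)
Your proposal follows essentially the same route as the paper, which likewise establishes this theorem by reducing the network transformation to a composition of splitting (Theorem~\ref{THjmnatfnopSplit}), aggregation (Theorem~\ref{THjmnatfnopAggr}), and the basic operations, deferring the detailed bookkeeping to the proof of the corresponding result for jump M-convex functions in \cite{KMT07jump}. Your additional care about propagating the finiteness hypotheses $\dom g \neq \emptyset$ and $g > -\infty$ through the intermediate steps is exactly the right concern and is consistent with how the cited argument is carried out.
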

\begin{proof}
This fact can be proved based on 
Theorem~\ref{THjmnatfnopAggr} for aggregation, 
Theorem~\ref{THjmnatfnopSplit} for splitting,
and other basic operations.
See the proof of \cite[Theorem 14]{KMT07jump} for the detail.
\end{proof}

%%%%%%%%%%%%

\section*{Acknowledgement}

The author thanks Jim Geelen for discussion back in 1996.
He is also thankful to  Yusuke Kobayashi, Kenjiro Takazawa, and Akihisa Tamura
for discussion and comments.
This work was supported by 
CREST, JST, Grant Number JPMJCR14D2, Japan, and
JSPS KAKENHI Grant Number JP20K11697.

%%\newpage
%%\input{jMnat9ref}

%% 2020-09-17 

\end{document}